\newcommand{\bv}{{\boldsymbol v}}
\newcommand{\bw}{{\boldsymbol w}}
\newtheorem{theorem}{Theorem}
\newtheorem{lemma}[theorem]{Lemma}
\newtheorem{definition}{Definition}
\newtheorem{remark}{Remark}
\newtheorem{corollary}{Corollary}
\newtheorem{conj}{Conjecture}
\begin{document}

\begin{center}
  \textbf{Karapetyants M.A.}
\end{center}

\begin{center}
    \textbf{Subdivision schemes on a dyadic half-line}
\end{center}

\begin{center}
  \textbf{Abstract}
\end{center}

In this paper subdivision schemes, which are used for functions approximation and curves generation, are considered. In classical case, for the functions defined on the real line, the theory of subdivision schemes is widely known due to multiple applications in constructive approximation theory, signal processing as well as for generating fractal curves and surfaces. Subdivision schemes on a dyadic half-line – positive half-line, equipped with the standard Lebesgue measure and the digitwise binary addition operation, where the Walsh functions play the role of exponents, are defined and studied. Necessary and sufficient convergence conditions of the subdivision schemes in terms of spectral properties of matrices and in terms of the smoothness of the solution of the corresponding refinement equation are proved. The problem of the convergence of subdivision schemes with non-negative coefficients is also investigated. Explicit convergence criterion of the subdivision schemes with four coefficients is obtained. As an auxiliary result fractal curves on a dyadic half-line are defined and the formula of their smoothness is proved. The paper contains various illustrations and numerical results.

Bibliography: 26 items.

Keywords: Subdivision schemes, dyadic half-line, fractal curves, smoothness of fractal curves, spectral properties of matrices.

\subsection{Introduction}

\subsubsection{Dyadic half-line}

We begin with the definition of the dyadic half-line \cite{G, GES}. We consider sequences

$$
x = \{ \ldots, x_{j-1}, x_j, x_{j+1}, \ldots \},
$$

where $ x_j \in \{0, 1\} $ and $ x_j = 0 $, $ j > k $, for sufficiently large $ k > 0 $. Each element $x$ corresponds to a convergent series $\sum_{n\in{{\mathbb Z}}}x_n 2^n$, which is the binary decomposition of a number in $ \mathbb R_+ = [0; \infty)$. Define algebraic summation operation as follows: sum of two sequences $ x $ and $ y $ is sequence $ z $, such that

$$
z \ = \  x \oplus y \ = \ \{ x_i \oplus y_i \}
$$

for each $i$;

$$
    x_i \oplus y_i = \left\{ \begin{array}{cc}
    0, & x_i + y_i \in \{0, 2\}, \\
    1, & x_i + y_i = 1.
    \end{array}\right.
$$

It follows, that $x \oplus x \ = \ 0 \ $ for each $x \in {\mathbb R}_+$, which implicates the coincidence of $\oplus$ operation and inverse operation $\ominus$. For instance,

$$
    3 = 11_{2}, \ 6 = 110_{2}, \ 3 \oplus 6 = 5
$$

The continuity of a dyadic function on ${\mathbb R}_+$ in literature is called $W$-{\em continuity} (after Walsh, it seems). A function $f$ is $W$-continuous at the point $x$, if

$$
  \lim_{h \to 0} |f(x \oplus h) - f(x)| = 0
$$

Basically, $W$-continuous function $f$ is continuous from the right at dyadic rational points of the half-line and continuous in usual sense at all others.
Half-line ${\mathbb R}_+$ equipped with $\oplus$ operation and $W$-continuity we call {\em dyadic half-line}.

Unit shift of the dyadic half-line is represented below: linear function alters as it is shown on the figure 1.

\begin{figure}[h]
\begin{minipage}[h]{0.49\linewidth}
\center{\includegraphics[width=1\linewidth]{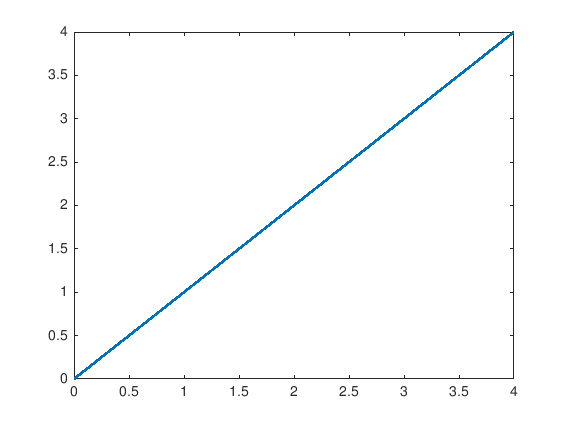} \\ а)f(x)}
\end{minipage}
\hfill
\begin{minipage}[h]{0.49\linewidth}
\center{\includegraphics[width=1\linewidth]{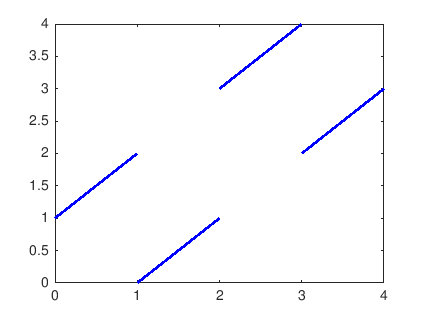} \\ б)f(x $\oplus$ 1)}
\end{minipage}
\caption{Unit shift of the dyadic half-line.}
\label{ris:image0}
\end{figure}

\subsubsection{Subdivision schemes}

The first examples of subdivision schemes (in the classical case, that is, on a straight line with regular addition) appeared in the early fifties in the works of G. de Rham (the famous algorithm of ''cutting corners'' \cite{P_4, DeR}). Then this idea was developed in the works of G. Chaikin \cite{Ch}, G. Deslauriers and S. Dubuc \cite{DD}. The general theory of subdivision algorithms was represented by N. Dyn, A. Levin, C. Micchelli, V. Dahmen and A. Kavarette, K. Conti, P. Osvald, and other researchers. (eg. \cite{CDM, Dyn, DynLG, DynLM} and references in these papers). All these papers are devoted to the approximation of functions, or the construction of fractal curves on the usual (classical) line, or generalizations to functions of several variables. We restrict ourselves to the functions of one variable. Let us recall the basic concepts and facts about the subdivision schemes on the real line.

Consider the space $\ell_\infty$ and a linear operator $ \tilde{S}: \ell_\infty \rightarrow {\ell}_\infty$, defined by a finite set of coefficients called {\em mask}: $ { \{ c_j \} }_{j \in \mathbb J} $. This operator acts according to the following rule: for any sequence $a \in {\ell}_{\infty}$ 

$$
(\tilde{S} a)(\alpha) \ = \ \sum_{\beta \in {{\mathbb Z}}} c_{\alpha - 2\beta} \  a_\beta
$$

One iteration of {\em subdivision scheme} consists in the application of the operator $ \tilde{S} $ to some sequence $a$.
We match sequence $a$ piecewise linear function with nodes at integer points, with the value of the function at the node $k$ equal to $a_k$. So we do with the sequence $ \tilde{S} a $, but with nodes in half-integer points (on $\frac12 \mathbb Z$). We say, that {\em subdivision scheme converges}, if for any bounded sequence there is a continuous function $f = f_a$ such, that

$$
    \lim_{n \to \infty} \|\ f(2^{-n} \cdot) - (\tilde{S}^na)(\cdot) \|_{\infty} = 0
$$

Thus, if the scheme converges, then for each sequence $a$ there is a continuous function~$f_a$.

{\em Dyadic subdivision operator} matches sequence $a \in {\ell}_{\infty}$ sequence $Sa$, defined as follows:

$$
(Sa)(\alpha) \ = \ \sum_{\beta\in{{\mathbb Z}}} c_{\alpha \ominus 2\beta} \  a_\beta
$$

By {\em dyadic subdivision scheme} we mean the sequential application of the operator $S$ to some bounded sequence $a$. The convergence of the dyadic subdivision scheme is determined similarly to the classical one, but the limit function will be defined on $ \mathbb R_+ $, and will be $W$-continuous.

\begin{definition}\label{Convergence}
    A dyadic subdivision scheme converges if, for any bounded sequence, there exists a $W$-continuous function $f$ such that
    $$
        \lim_{n \to \infty} \|\ f(2^{-n} \cdot) - (S^n a)(\cdot) \|_{\infty} = 0
    $$
\end{definition}

This paper is devoted to the study of various properties of dyadic subdivision operators and schemes, as well as the associated dyadic fractals. In Section 2 the necessary and sufficient conditions for the convergence of the dyadic subdivision schemes are formulated. In Section 3 the conditions under which two affine operators generate a dyadic fractal curve are investigated. This result is used in the construction of limit functions of subdivision schemes. Section 4 examines some spectral properties of subdivision schemes. Sections 5 and 6 are devoted to dyadic subdivision schemes with positive and non-negative coefficients, respectively. Section 7 discusses in detail the simplest special case of the dyadic scheme defined by four coefficients, and its explicit convergence conditions are obtained. Section 8 consists entirely of examples and illustrations of various dyadic subdivision schemes.

\subsection{Necessary and sufficient convergence conditions}

In order to apply a subdivision scheme to interpolation problems, it is necessary for this scheme to converge. First of all, it should be established under what conditions the subdivision scheme is guaranteed to converge for any initial sequence. To do this, we need several auxiliary assertions.
Define $\delta$-sequence as follows: $\delta(\cdot ) = (1, 0,0, \ldots)$.

\begin{lemma}\label{Delta}
    The subdivision scheme converges if and only if it converges on $\delta$-sequence.
\end{lemma}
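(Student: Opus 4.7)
My plan is to treat the two implications separately. The ``only if'' direction is immediate: the $\delta$-sequence is bounded, so convergence of the scheme on every bounded sequence includes convergence on $\delta$ as a special case.

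For the nontrivial direction, I would set $\phi_n := S^n\delta$ and let $\varphi$ denote the $W$-continuous limit function supplied by convergence on $\delta$, so that $\|\phi_n(\cdot) - \varphi(2^{-n}\cdot)\|_\infty \to 0$. Writing $\delta_k(\beta) := [\beta = k]$, I would decompose an arbitrary bounded $a$ as $a = \sum_k a_k\,\delta_k$ and exploit linearity of $S$ to get $S^n a = \sum_k a_k\, S^n\delta_k$. The engine of the proof is the shift-equivariance identity
\[
(S^n \delta_k)(\alpha) \;=\; \phi_n(\alpha \ominus 2^n k),
\]
which I would prove by induction on $n$, the inductive step resting on the fact that multiplication by $2$ on $\mathbb R_+$ is a binary shift and therefore commutes with $\oplus$ (so $2(\beta \oplus k) = 2\beta \oplus 2k$), together with the substitution $\gamma = \beta \ominus 2k$ in the defining sum for $S$.

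Next I would define the candidate limit function
\[
f_a(x) \;:=\; \sum_{k} a_k\, \varphi(x \ominus k).
\]
Because the mask is finite, $\phi_n$ is supported in an interval of length $O(2^n)$, and after rescaling by $2^{-n}$ this yields a compact support $[0,L]$ for $\varphi$. Consequently, at each $x$ only finitely many terms in the sum for $f_a$ are nonzero, making $f_a$ well-defined, bounded by $C\|a\|_\infty$, and $W$-continuous as a locally finite sum of shifts of the $W$-continuous $\varphi$. Using the identity $2^{-n}(\alpha \ominus 2^n k) = 2^{-n}\alpha \ominus k$, the difference
\[
(S^n a)(\alpha) - f_a(2^{-n}\alpha) \;=\; \sum_k a_k\,\bigl[\phi_n(\alpha \ominus 2^n k) - \varphi(2^{-n}\alpha \ominus k)\bigr]
\]
is a sum of $O(L)$ terms, each bounded by $\|a\|_\infty \cdot \|\phi_n - \varphi(2^{-n}\cdot)\|_\infty$, and hence tends to zero uniformly in $\alpha$.

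The main obstacle I anticipate is the support bookkeeping: I must confirm that the support of $\phi_n$, measured in units of $2^{-n}$, stays uniformly bounded so that $\varphi$ is compactly supported, and that dyadic rescaling by $2^{-n}$ interacts cleanly with $\oplus$-shifts by multiples of $2^n$. The first follows from the elementary inequality $\alpha \oplus \beta \le \alpha + \beta$ (no carry in XOR) applied inductively to the recursion $\phi_{n+1} = S\phi_n$, and the second is a direct consequence of the shift-commutation identity above. Once these are in place, the rest is a one-line estimate.
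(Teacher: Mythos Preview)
Your proof is correct and follows the same strategy as the paper: decompose an arbitrary bounded sequence as a linear combination of shifted $\delta$-sequences and invoke linearity of $S$. The paper's own argument is considerably terser---it simply writes $a = \sum_k a_k\,\delta(\cdot \ominus k)$ and appeals to linearity---leaving the shift-equivariance identity, the compact-support bookkeeping, and the uniform estimate implicit, whereas you spell these out carefully.
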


\begin{proof}
    If the subdivision scheme converges, then, obviously, it converges on $\delta$-sequence. Now, note that any bounded sequence can be represented as a linear combination of shifts of $\delta$-sequence:
    \begin{equation}\label{linear combination of delta}
    a \ = \ \sum_{k \in{{\mathbb Z}}} \delta(\cdot \ominus k) a_k
    \end{equation}
    Therefore, if the subdivision scheme converges on $\delta$-sequence, then it converges on any bounded sequence due to the linearity of the subdivision operator.
\end{proof}

This fact means that it is sufficient to verify the convergence on $\delta$-sequence, and if it is possible to establish the fact of convergence on $ \delta $, then it is transferred to all other sequences in view of the lemma.

\begin{lemma}
    Let $\varphi$ be limit function of the subdivision scheme on $\delta$-sequence. If there is a limit function $f_a$ on any sequence $a \in \ell_\infty$, then it is given as follows:
    \begin{equation}\label{representation of a random function}
    f \ = \ \sum_{k \in{{\mathbb Z}}}\, {a}_k\, \varphi(x \ominus k)
    \end{equation}
\end{lemma}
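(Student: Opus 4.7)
The plan rests on three ingredients: linearity of $S$, a shift covariance of its iterates, and the compact support of $\varphi$ inherited from the finiteness of the mask. Using representation \eqref{linear combination of delta} and the linearity of $S$, I would first write
$$S^n a \ = \ \sum_{k\in{{\mathbb Z}}} a_k\, S^n\bigl(\delta(\cdot\ominus k)\bigr).$$

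Next I would establish, by induction on $n$, the shift identity
$$\bigl(S^n(\delta(\cdot\ominus k))\bigr)(\alpha) \ = \ (S^n\delta)(\alpha\ominus 2^n k).$$
For $n=1$ the definition of $S$ gives $(S(\delta(\cdot\ominus k)))(\alpha)=c_{\alpha\ominus 2k}=(S\delta)(\alpha\ominus 2k)$. The inductive step uses the substitution $\gamma=\beta\ominus 2^n k$ in the defining sum; the key algebraic fact is that multiplication by $2$ is a digit shift and therefore commutes with $\oplus$, so $2(\gamma\oplus 2^n k)=2\gamma\oplus 2^{n+1}k$. Consequently
$$(S^n a)(\alpha) \ = \ \sum_k a_k\,(S^n\delta)(\alpha\ominus 2^n k),$$
and setting $x=2^{-n}\alpha$ (using again that $2^{-n}$ commutes with $\oplus$) the right-hand side becomes $\sum_k a_k (S^n\delta)\bigl(2^n(x\ominus k)\bigr)$.

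The left-hand side tends to $f_a(x)$ by hypothesis, while each term $(S^n\delta)(2^n(x\ominus k))$ tends to $\varphi(x\ominus k)$ by definition of $\varphi$, so formally one obtains \eqref{representation of a random function}. The main obstacle is that $a$ may have infinite support, so the sum over $k$ is \emph{a priori} an infinite series. This is resolved by observing that the finiteness of the mask forces the support of $S^n\delta$, after rescaling by $2^{-n}$, to lie in a fixed interval $[0,N]$; hence $\varphi$ has compact support in $[0,N]$. For each fixed $x$ only finitely many indices $k$ contribute to either sum, and the difference between the two sums is bounded uniformly by $C\|a\|_\infty\cdot\|(S^n\delta)(\cdot)-\varphi(2^{-n}\cdot)\|_\infty$, which vanishes as $n\to\infty$. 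Passing to the limit then yields the claimed representation, and $W$-continuity of the resulting function follows from the uniform limit and the compact support of $\varphi$.
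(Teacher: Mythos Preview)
Your proof is correct and follows the same strategy as the paper's: decompose $a$ into shifted $\delta$-sequences via \eqref{linear combination of delta}, use the linearity of $S$, and pass to the limit termwise. In fact you supply considerably more detail than the paper's brief sketch---the explicit shift identity $S^n(\delta(\cdot\ominus k))=(S^n\delta)(\cdot\ominus 2^n k)$ and the compact-support argument justifying the termwise limit for an unbounded-support sequence are both left implicit in the original.
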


\begin{proof}
    As mentioned above, any limited sequence can be represented as (\ref{linear combination of delta}). We apply $k$ times to the both parts of (\ref{linear combination of delta}) operator $S$ and tend $k$ to infinity. For each term on the right side of the equality, there is a limit function, therefore, the limit function for the left side of the equality also exists and is given by (\ref{representation of a random function}).
\end{proof}

So, let $\varphi$ be the limit function of the subdivision scheme on the delta sequence (provided the scheme converges). Is it possible to find this function without using a subdivision scheme? The answer is affirmative: this function is the solution of a certain difference equation with the compression of the argument.

\begin{lemma}
     Let $\varphi$ be a limit function of a subdivision scheme. Then $\varphi$ is the solution of the following equation

\begin{equation}\label{refinement equation}
    \varphi(x)\  = \ \sum_{k\in{{\mathbb Z}}} c_k \varphi(2x \ominus k)
\end{equation}

\end{lemma}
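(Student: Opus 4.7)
The plan is to reduce the refinement equation to the scale-change identity
\[
f_a(x) \;=\; f_{Sa}(2x), \qquad x \in \mathbb{R}_+,
\]
which I claim holds for every bounded initial sequence $a$. Once it is in hand, I specialise to $a = \delta$. A direct computation gives $(S\delta)(\alpha) = \sum_\beta c_{\alpha \ominus 2\beta}\delta_\beta = c_\alpha$, so $S\delta$ is literally the mask sequence $\{c_k\}$. Applying the representation formula \eqref{representation of a random function} of the previous lemma to $S\delta$ then gives
\[
\varphi(x) \;=\; f_\delta(x) \;=\; f_{S\delta}(2x) \;=\; \sum_{k\in\mathbb{Z}} c_k\,\varphi(2x \ominus k),
\]
which is exactly \eqref{refinement equation}. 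Note that Lemma~\ref{Delta} guarantees $f_{Sa}$ exists for every bounded $a$ as soon as $\varphi = f_\delta$ does, so there is no existence issue for the right-hand side of the claimed identity.

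To prove the scale-change identity, I will read one and the same quantity $(S^{n+1}a)_\alpha$ from two viewpoints. Treated as the $(n+1)$-st iterate of $S$ applied to $a$, Definition~\ref{Convergence} gives
\[
\sup_\alpha\bigl|(S^{n+1}a)_\alpha - f_a(2^{-(n+1)}\alpha)\bigr| \;\longrightarrow\; 0.
\]
Treated instead as the $n$-th iterate applied to the bounded sequence $Sa$, via $S^{n+1}a = S^n(Sa)$, the same definition gives
\[
\sup_\alpha\bigl|(S^{n+1}a)_\alpha - f_{Sa}(2^{-n}\alpha)\bigr| \;\longrightarrow\; 0.
\]
The triangle inequality, evaluated at the particular index $\alpha = 2^{n-n_0}\alpha_0$ for $n \ge n_0$ large, then forces $f_a(y/2) = f_{Sa}(y)$ at every dyadic rational $y = 2^{-n_0}\alpha_0$: the left-hand side does not depend on $n$, while the right-hand side is dominated by the two sup-bounds, which go to zero.

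The main obstacle is the last step: extending the identity from the dense set of dyadic rationals to all of $\mathbb{R}_+$. This is where the dyadic-half-line structure really matters --- both $f_a$ and $f_{Sa}$ are $W$-continuous, and every $x \in \mathbb{R}_+$ is the $W$-limit of its finite binary truncations (which are dyadic rationals), so the pointwise equality propagates to the whole half-line by $W$-continuity. Specialising to $a = \delta$ and reading off the mask from $S\delta = \{c_k\}$ completes the proof.
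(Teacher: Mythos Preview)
Your argument is correct. It is close in spirit to the paper's proof but organized differently, and the reorganization is worth noting.

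The paper works directly with the iteration: it expands $S\delta = \sum_k c_k\,\delta(\cdot \ominus k)$, applies $S^{n-1}$ and linearity to get $S^n\delta = \sum_k c_k\, S^{n-1}\bigl(\delta(\cdot \ominus k)\bigr)$, passes to piecewise-linear interpolants $f_n$, and sends $n\to\infty$, using along the way that a shift of the initial data produces a shift of the limit and that the ``manual'' first iteration accounts for the factor $2$ in the argument. Your route instead isolates a clean general principle, the scale-change identity $f_a(x)=f_{Sa}(2x)$, proved once and for all from Definition~\ref{Convergence} by reading $S^{n+1}a$ as both $S^{n+1}a$ and $S^n(Sa)$; you then plug in $a=\delta$ and finish by invoking the representation formula~\eqref{representation of a random function} from the previous lemma rather than re-deriving the shift behaviour. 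What you gain is modularity: the scale-change identity is a statement of independent interest, and the appeal to Lemma~2 avoids rehandling piecewise-linear approximants. What the paper's version gains is self-containment --- it does not need to cite Lemma~2 and makes the role of shift-equivariance explicit. Both arguments ultimately rest on the same observation that $S\delta$ is the mask and on comparing two readings of $S^{n+1}\delta$, so the difference is one of packaging rather than of underlying idea.
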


\begin{remark}
Equation (\ref{refinement equation}) is called refinement regarding $\varphi$.
\end{remark}

\begin{proof}
    Let the subdivision scheme with the coefficients  $ \{c_k\}_{k = 0}^N $ converge and $\varphi$ be its limit function on $\delta$-sequence. Since after one iteration $S$ on $\delta$ we get a sequence consisting entirely of coefficients that define subdivision operator, then
    $$
    S^n(\delta) = \sum_{k\in{{\mathbb Z}}} c_k S^{n-1}(\delta(\cdot \ominus k)).
    $$
    On each step $n$ we match $\ S^n(\delta)$ with the sequence on $R_+$ with interval $2^{-n}$, beginning from zero, and connect the obtained points piecewise linearly. The functions thus obtained $f_n$ will converge to $\varphi$ due to the convergence of the scheme:
    $$
    \varphi = \lim_{n\to\infty}f_n,
    $$
    and, similarly
    \begin{equation}\label{Representation of f_n}
        f_n(\cdot) = \sum_{k\in{{\mathbb Z}}} c_k f_{n-1}(\cdot \ominus k).
    \end{equation}
    It only remains to note that multiplying the initial sequence by a constant is equivalent to multiplying the limit function by a constant, while shifting the argument of the initial sequence shifts the argument of the limiting function. Passing to the limit as $n \to \infty$ in equality (\ref{Representation of f_n}) and given that we have done one iteration "manually", we obtain
    $$
    \varphi(x) = \sum_{k\in{{\mathbb Z}}} c_k \varphi(2x \ominus k).
    $$
\end{proof}

Not any subdivision scheme converges. Our immediate goal is to obtain convergence conditions. We start with the necessary conditions. They repeat exactly the same conditions for the classical subdivision schemes on the line.

Function $f$ is called uniformly continuous on $\mathbb R_+$, if for each $ \varepsilon > 0$ there exists $ \delta > 0 $ such, that $ \| f(y) - f(x) \| < \varepsilon $, as soon as $ \| y \ominus x \| < \delta, \ \  x, y \in \mathbb R_+ $.

\begin{theorem}\label{Necessary convergence condition}
    Given a subdivision scheme with mask $ \{c_k\}_{k = 0}^N $. If the scheme converges, then
\begin{equation}\label{eq.nec}
    \sum_{k \in {{\mathbb N}}} c_{2k} = \sum_{k \in {{\mathbb N}}} c_{2k+1} = 1.
\end{equation}

\end{theorem}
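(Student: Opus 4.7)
My approach is to evaluate the partial sums $\sum_\alpha (S^n\delta)_\alpha$ and $\sum_{\alpha \text{ even}} (S^n\delta)_\alpha$ in two different ways---once directly from the combinatorics of $S$, and once asymptotically from the convergence $S^n\delta \to \varphi$---and then to compare them.

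The combinatorial step rests on a single observation about the bit-wise operation $\oplus$: since $2\beta$ ends in the binary digit $0$, the integer $\alpha \ominus 2\beta$ shares the last bit of $\alpha$, hence the same parity. Consequently, in $(Sa)_\alpha = \sum_\beta c_{\alpha\ominus 2\beta}\,a_\beta$, the inner sum $\sum_\alpha c_{\alpha \ominus 2\beta}$ equals $A+B$ over all $\alpha$, $A$ over even $\alpha$, and $B$ over odd $\alpha$, independently of $\beta$ (writing $A = \sum_k c_{2k}$ and $B = \sum_k c_{2k+1}$). A one-line induction on $n$ then yields $\sum_\alpha (S^n\delta)_\alpha = (A+B)^n$ and $\sum_{\alpha\text{ even}}(S^n\delta)_\alpha = A(A+B)^{n-1}$.

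The asymptotic side uses that by the convergence hypothesis $(S^n\delta)_\alpha = \varphi(\alpha/2^n) + o(1)$ uniformly in $\alpha$, and that $\varphi$ is $W$-continuous and compactly supported (its support lies in $[0,N']$ for some $N'$ depending only on the mask, which has length $N+1$), so only $O(2^n)$ indices contribute non-negligibly. The two sums above are therefore Riemann sums at scale $2^{-n}$ for $\int_{\mathbb R_+}\varphi$, evaluating to $2^n\int \varphi + o(2^n)$ and $2^{n-1}\int\varphi + o(2^n)$ respectively. Taking the ratio of the two combinatorial/asymptotic pairs forces $A/(A+B) \to 1/2$, so $A = B$; comparing magnitudes of the first pair forces $(A+B)^n/2^n \to \int\varphi$, so $A+B = 2$, and therefore $A = B = 1$.

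The main obstacle I expect is the last implication, which requires $\int \varphi \neq 0$: the zero mask $c_k \equiv 0$ formally satisfies the convergence definition (with $\varphi \equiv 0$) but violates the conclusion, so the theorem must implicitly presuppose a non-trivially convergent scheme---one whose basic limit $\varphi$ integrates to a nonzero value. Under this implicit hypothesis (which is standard in subdivision-scheme theory and can be read off, for example, from partition-of-unity type arguments for $\varphi$), the two asymptotic identities close the argument and deliver $\sum_k c_{2k} = \sum_k c_{2k+1} = 1$.
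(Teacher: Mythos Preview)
Your argument is sound and takes a genuinely different route from the paper's. The paper argues \emph{locally}: it picks a point where the limit function does not vanish, observes that for large $n$ all the values $f_n(x_{2k})$ entering the update $f_{n+1}(x)=\sum_k c_{2k}f_n(x_{2k})$ lie within $2\varepsilon_n$ of a single number $\hat f_n\approx f(x)$, and concludes from $\bigl|\hat f_n-\bigl(\sum_k c_{2k}\bigr)\hat f_n\bigr|=O(\varepsilon_n)$ that $\sum_k c_{2k}=1$ (and likewise for the odd sum). Your argument is \emph{global}: you derive the exact combinatorial identities $\sum_\alpha(S^n\delta)_\alpha=(A+B)^n$ and $\sum_{\alpha\text{ even}}(S^n\delta)_\alpha=A(A+B)^{n-1}$ from the parity-preservation ``$\alpha\ominus 2\beta$ has the last bit of $\alpha$'', and then match these against the Riemann-sum asymptotics $2^n\!\int\varphi$ and $2^{n-1}\!\int\varphi$ for the $W$-continuous, compactly supported limit $\varphi$.

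Both proofs tacitly need a non-degeneracy hypothesis, and you are right to flag it. The paper's version needs the limit function to be nonzero at the point under consideration (otherwise $\hat f_n\to 0$ and the inequality is vacuous); your version needs the slightly stronger $\int\varphi\neq 0$. In return, your method yields the closed-form mass identities for $S^n\delta$ as a by-product and makes transparent use of the dyadic arithmetic, which the paper's local argument does not exploit. If you want to weaken your hypothesis to ``$\varphi\not\equiv 0$'', you could replace the full integral by $\int\varphi\psi$ for a suitable test function supported where $\varphi$ is bounded away from zero; but as stated, with the standard normalisation $\int\varphi\neq 0$, your proof is complete.
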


\begin{proof}
    Let the initial sequence be given:
    $$
    a \ = \ \{\ldots, a_{-1}, a_0, a_1, \ldots\}.
    $$
    It can be associated with the function $ f_0(x) $ defined on a uniform lattice on $ R_+ $ with a step one. Similarly the sequence $Sa$ could be associated with the function $f_1(x)$, but with a different step: $\frac12$, etc. Consider the sum
    $$
    \sum_{k \in {{\mathbb N}}} c_{2k} f_n(x_{2k}),
    $$
    where $f_n(x_{2k})$ is value of function $f_n(x)$, associated with coefficient $c_{2k}$. Since the scheme converges to some function f(x), then for any $ \varepsilon > 0 $ there is a number $N$ such, that for each $n > N$ the inequality $ |f_n(x)-f(x)| < \varepsilon $ holds.
    For each n $\geq$ N on the iteration n intervals between function values $f_n(x)$ are small and equal to $2^{-n}$, consequently, the values of function $f_n(x)$ do not differ much, because for $f_n(x)$ there is a uniform continuity on each of the segments whose ends are binary rational points. Let for $ {\varepsilon}_n = 1/n $ be a number $ n_0 > N $ such, that for each $ p \in \mathbb N $ the inequality $ |f_{n_0+p}(x)-f(x)| < {\varepsilon}_n $ holds.
    Consider one of these segments and denote ${\hat f}_n$ as a value of function $f_n(x)$ at the left end of this segment. We suppose that on $n+1$ step the values of the function $f_{n+1}(x)$ differ in the corresponding points from ${\hat f}_n$ no more than $2{\varepsilon}_n$ (otherwise, we will select a new $ n_0 $ to satisfy this condition). On the other hand,
    $$
    \sum_{k \in {{\mathbb N}}} c_{2k} ( {\hat f}_n - 2{\varepsilon}_n ) \ \leq \ f_{n+1}(x) \ = \ \sum_{k \in {{\mathbb N}}} c_{2k} f_n(x_{2k}) \ \leq \ \sum_{k \in {{\mathbb N}}} c_{2k} ( {\hat f}_n + 2{\varepsilon}_n ),
    $$
    where ${\varepsilon}_n \to 0$ as $n \to \infty$. But $|f_{n+1}(x) - {\hat f}_n| < 2{\varepsilon}_n$, from which and the last inequality it follows that
    $$
    \sum_{k \in {{\mathbb N}}} c_{2k} = 1.
    $$
    Similarly, it is proved, that $ \sum_{k \in {{\mathbb N}}} c_{2k+1} = 1 $.
\end{proof}

Theorem 4 is a necessary condition for the convergence of a subdivision scheme in terms of its mask. What is sufficient for the convergence of the subdivision scheme? To answer this question we need to state a few auxiliary definitions.

In future, unless otherwise specified, we will omit the symbol $ W $ in the designation of $W$-continuity, i.e. by continuity we mean $W$-continuity. By $C(\mathbb R_+)$ we define a space of $W$-continuous functions.

Let $f$ be a compactly supported continuous function.

\begin{definition}\label{Stability of a function}
    We say, that function $f$ possesses linearly independent integer shifts if for any sequence $a$ 

    \begin{equation}\label{eq.stab}
        \sum_{k\in{{\mathbb Z}}} a_k f(x \ominus k) \ \not\equiv \ 0
    \end{equation}

    The function $f$ is called {\em stable} if~(\ref{eq.stab}) is satisfied for any $a \in {\ell}_\infty$.
\end{definition}

We define now the linear operator $T:C(\mathbb R_+) \rightarrow C(\mathbb R_+)$, acting according to the rule

$$
T(f(x)) \ = \ \sum_{k=0}^N p_k f(2x \ominus k),
$$

where $ \{ p_k \}_{k = 0}^N $ is a finite set of coefficients.
In literature (in the classical case) this operator is called the {\em transition operator}. We now establish a connection between the operators $S$ and $T$.

\begin{lemma}\label{Transition operator}
    Let  $f(x)$ be continuous and compactly supported. There is an equality:
    $$
    (T^nf)(x) = \sum_{k \in {{\mathbb Z}}} (S^n \delta)(k) f(2^nx \ominus k)
    $$
\end{lemma}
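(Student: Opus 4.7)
The approach is induction on $n$, with the base case essentially a restatement of the definitions and the inductive step relying on a careful choice of which copy of $T$ to peel off. For $n=1$, a direct computation from the definition of $S$ gives $(S\delta)(k) = \sum_\beta c_{k \ominus 2\beta}\,\delta_\beta = c_k$, so $\sum_k (S\delta)(k)\, f(2x \ominus k) = \sum_k c_k\, f(2x \ominus k) = (Tf)(x)$, using the implicit convention $p_k = c_k$ that connects the transition operator to $S$.

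For the inductive step, the plan is to write $T^{n+1} = T^n \circ T$, peeling off the \emph{inner} copy of $T$. Set $g = Tf$ and apply the inductive hypothesis to $g$:
$$
(T^{n+1} f)(x) \ = \ \sum_{\ell \in \mathbb{Z}} (S^n \delta)(\ell)\, g(2^n x \ominus \ell) \ = \ \sum_{\ell,k} (S^n \delta)(\ell)\, c_k\, f\bigl(2(2^n x \ominus \ell) \ominus k\bigr).
$$
Next, invoke the basic arithmetic fact that multiplication by $2$ on $\mathbb{R}_+$ is a left bit-shift and therefore distributes over $\oplus$, i.e.\ $2(a \oplus b) = 2a \oplus 2b$, to rewrite $2(2^n x \ominus \ell) = 2^{n+1}x \ominus 2\ell$. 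Re-index via $m = 2\ell \oplus k$ (equivalently $k = m \ominus 2\ell$, using that $\ominus$ coincides with $\oplus$), collect terms by $m$, and recognize the inner sum as $S$ applied to the sequence $S^n\delta$:
$$
(T^{n+1} f)(x) \ = \ \sum_m f(2^{n+1} x \ominus m) \sum_\ell (S^n\delta)(\ell)\, c_{m \ominus 2\ell} \ = \ \sum_m (S^{n+1}\delta)(m)\, f(2^{n+1} x \ominus m),
$$
closing the induction.

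The main obstacle is choosing the correct direction of composition: peeling off the \emph{outer} $T$ instead would produce exponents of the form $2^n j$ inside the argument of $f$, yielding an inner sum $\sum_j c_j (S^n \delta)(m \ominus 2^n j)$ that is not manifestly the defining recursion $(Sa)(\alpha) = \sum_\beta c_{\alpha \ominus 2\beta}\, a_\beta$ for $a = S^n\delta$, so the induction does not close directly; peeling from the other end places the factor of $2$ exactly where the definition of $S$ demands. A secondary but essential ingredient is the distributivity $2(a \oplus b) = 2a \oplus 2b$ on the dyadic half-line, which is what makes the computation formally mirror the classical one. Compact support of $f$ together with finiteness of the mask ensures all the rearrangements involve only finitely many nonzero terms for each fixed $x$, so interchange of summations is trivially justified.
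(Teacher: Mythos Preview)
Your proof is correct and follows essentially the same inductive route as the paper: write $T^{n+1}f = T^{n}(Tf)$, apply the hypothesis to $g = Tf$, expand, re-index via $m = 2\ell \oplus k$, and recognize the inner sum as $(S^{n+1}\delta)(m)$. The paper labels the inductive step as ``$T(T^{n-1}f)$'', but the computation it actually carries out (producing the argument $2^{n}x \ominus j \ominus 2k$ and then the coefficient $p_{j\ominus 2k}$) is precisely your inner-peeling argument.
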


\begin{proof}
    We prove this formula by induction. For $n \ = \ 1$ we have
    $$
    Tf(x) = \sum_{k=0}^{N} p_k f(2x \ominus k).
    $$
    On the other side,
    $$
    \sum_{k=0}^N p_k f(2x \ominus k) \ = \ \sum_{k=0}^N (S(\delta))(k) f(2x \ominus k).
    $$
    Let the formula be true for $n-1$:
    $$
    (T^{n-1}f)(x)\ = \ \sum_{k=0}^{N} (S^{n-1}(\delta))(k) f(2^{n-1}x \ominus k).
    $$
    Consider $T(T^{n-1}f)(x) = T^{n}(x)$:
    $$
    T(\sum_{k=0}^N p_k f(2^{n-1}x \ominus k)) \ = \ T(\sum_{k=0}^N (S^{n-1}(\delta))(k) f(2^{n-1}x \ominus k) \ = \
    $$
    $$
    = \ \sum_{j=0}^N \sum_{k=0}^N (S^{n-1}(\delta))(k) p_j f(2^nx \ominus  j \ominus  2k) \ = \ \sum_{j=0}^N \sum_{k=0}^N (S^{n-1}(\delta))(k) p_{j \ominus 2k} f(2^nx \ominus  j) \ = \
    $$
    $$
    = \ \sum_{j=0}^N (S^n(\delta))(j) f(2^nx \ominus  j).
    $$
    Lemma is proved.
\end{proof}

Now we formulate a theorem providing conditions sufficient for the convergence of the scheme. Its proof practically repeats the classical case.

\begin{theorem}\label{Sufficient convergence condition}
    If the refinement equation
    $$
    \varphi(x) \ = \ \sum_{k \in{{\mathbb Z}}} c_k \varphi(2x \ominus k)
    $$
    has a continuous and stable solution $f(x)$, then the corresponding subdivision scheme converges.
\end{theorem}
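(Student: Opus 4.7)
The plan is to follow the template of the classical proof: identify the candidate limit function explicitly, and then use stability of $\varphi$ to transfer convergence on the function level (where the refinement equation acts) to convergence of the iterates $S^n a$ on the sequence level.

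By Lemma~\ref{Delta} it suffices to prove convergence on $a=\delta$; the preceding lemmas in this section already force any limit function to have the form $f_a = \sum_k a_k\,\varphi(\cdot \ominus k)$, so for $a=\delta$ the candidate limit is $\varphi$ itself. To reach it, I would introduce the $W$-continuous indicator $h = \chi_{[0,1)}$. A direct check shows that $\sum_k a_k\,h(\cdot \ominus k)$ is exactly the piecewise-constant function with value $a_k$ on $[k,k+1)$, so by Lemma~\ref{Transition operator},
$$(T^n h)(x) \;=\; \sum_k (S^n\delta)(k)\, h(2^n x \ominus k)$$
equals the constant $(S^n\delta)(k)$ on each dyadic interval $[2^{-n}k,\,2^{-n}(k+1))$. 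Thus $T^n h$ is precisely the $W$-continuous interpolation of the sequence $S^n \delta$ on the grid $2^{-n}\mathbb Z$, and convergence of the scheme on $\delta$ is equivalent to the uniform convergence $T^n h \to \varphi$.

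Since $\varphi$ solves the refinement equation we have $T\varphi=\varphi$, and therefore $T^n h - \varphi = T^n(h-\varphi)$. It thus suffices to show $T^n \psi \to 0$ uniformly for the specific continuous compactly supported $\psi = h-\varphi$. Stability of $\varphi$ enters here: for a compactly supported $W$-continuous $\varphi$, the linear-independence statement of Definition~\ref{Stability of a function} upgrades to a genuine two-sided bound $\|b\|_\infty \asymp \|\sum_k b_k\,\varphi(\cdot\ominus k)\|_\infty$. Expanding $\psi$ in integer shifts of $\varphi$ plus a finite-dimensional residual encoded by a small vector of local coefficients, the action of $T^n$ becomes multiplication by powers of the associated transition matrices; the necessary mask condition \eqref{eq.nec} forces the residual into an invariant subspace on which these matrices contract, and $T^n\psi\to 0$ follows. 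Passing from $T^n h \to \varphi$ to the norm in Definition~\ref{Convergence} is then immediate, since the oscillation of $\varphi$ on a dyadic interval of length $2^{-n}$ tends to zero, and the general case $a\in\ell_\infty$ follows by linearity from the $\delta$-case.

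The main obstacle is the contraction step: promoting Definition~\ref{Stability of a function} to a norm equivalence in the dyadic setting (where the $\ominus$ operation interacts nontrivially with the support of $\varphi$), and then extracting the contractive behaviour of $T$ on the relevant finite-dimensional subspace of ``local deviations'' from the $\varphi$-span. This is the dyadic analogue of the transfer-matrix / joint-spectral-radius analysis familiar from the classical theory, and since the author notes that the proof ``practically repeats the classical case,'' I would expect only cosmetic modifications to be needed.
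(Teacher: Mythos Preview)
Your route via $T^nh$ with $h=\chi_{[0,1)}$ and a contraction argument for $T^n(h-\varphi)$ is \emph{not} the paper's approach, and the step you yourself flag as the ``main obstacle'' is a real gap, not a cosmetic one. To make ``the residual lands in an invariant subspace on which the transition matrices contract'' rigorous you would need $\rho(T_0|_W,T_1|_W)<1$, which is precisely the content of the \emph{later} Theorems~\ref{Continuous fractal solution}--\ref{Continuous refinement equation solution}. At this point in the paper that machinery is not available; the only hypothesis is the existence of a continuous stable $\varphi$, and nothing you wrote extracts a contraction from that hypothesis alone. So the argument as it stands is either circular or defers the entire difficulty to an unproved lemma.

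The paper's proof is shorter and avoids matrices altogether. The key move you are missing is to apply Lemma~\ref{Transition operator} with $f=\varphi$ itself rather than with an auxiliary $h$. Since $T\varphi=\varphi$, this gives for every $n$
\[
\varphi(x)\;=\;\sum_{k}(S^n\delta)(k)\,\varphi(2^nx\ominus k).
\]
Next one shows that $F(x)=\sum_k\varphi(x\ominus k)$ satisfies $F(x/2)=F(x)$ (using~\eqref{eq.nec}), hence is constant on dyadic rationals and, by $W$-continuity, constant everywhere; stability makes this constant nonzero, so after normalisation $\sum_k\varphi(\cdot\ominus k)\equiv 1$. This partition of unity together with uniform $W$-continuity and compact support of $\varphi$ yields, for large $n$,
\[
\Bigl|\varphi(x)-\sum_{k}\varphi(2^{-n}k)\,\varphi(2^nx\ominus k)\Bigr|<\varepsilon\qquad\forall\,x.
\]
Subtracting the two displays and invoking stability once more---now genuinely as the lower bound $\|\sum_k b_k\varphi(\cdot\ominus k)\|_\infty\ge c\,\|b\|_\infty$---gives $|(S^n\delta)(k)-\varphi(2^{-n}k)|<C\varepsilon$ for all $k$, which is convergence on $\delta$.

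So your instinct to upgrade Definition~\ref{Stability of a function} to a two-sided norm equivalence is correct, but it is used only for that final inversion step; no expansion of $h-\varphi$, no residual subspace, and no spectral-radius estimate are needed.
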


\begin{proof}
    Due to the lemma \ref{Delta} we only have to prove this theorem for $\delta$-sequence. By lemma \ref{Transition operator}:
    $$
    (T^n f)(x) = \sum_{k \in {{\mathbb Z}}} (S^n(\delta))(k) f(2^n x \ominus k).
    $$
    If the scheme converges, then it converges to a continuous function. $f(x)$ and
    $$
    f(x) = \sum_{k \in {{\mathbb Z}}} (S^n(\delta))(k) f(2^n x \ominus k),
    $$
    because $f(x)$ is an eigenfunction of $T$. Consider 1-periodical function $F(x) = \sum_{k \in {{\mathbb Z}}} f(x \ominus k)$. We show that it is constant. Indeed,
    $$
    \sum_{i \in {{\mathbb Z}}} f(\frac{x}{2} \ominus i) \ = \ \sum_{i \in {{\mathbb Z}}} \sum_{j \in {{\mathbb Z}}} {c}_j  f(x \ominus 2i \ominus j) \ = \
    $$
    $$
    \ = \ \sum_{i \in {{\mathbb Z}}} \sum_{j \in {{\mathbb Z}}} {c}_{j-2i} f(x \ominus j) \ = \ \sum_{i \in {{\mathbb Z}}} f(x \ominus i),
    $$
    since in order for the scheme to converge, it is necessary that the sum of all even (as well as odd) coefficients are equal to one. We obtained, that $F(\frac{x}{2}) = F(x)$ - the function is constant at half-integer points. Similarly, we obtain that it is constant at all binary rational points, and since this set is everywhere dense on ${\mathbb R}_+$, then $F(x)$ is constant on ${\mathbb R}_+$ due to its continuity. By condition the function $f(x)$ is stable on $\ell_\infty$, therefore,  function $F(x)$ differs from zero and we may put $F(x) = \sum_{k \in {{\mathbb Z}}} f(x \ominus k) = 1$. As function $f(x)$ is continuous and compactly supported, then for each $ \varepsilon > 0 $
    $$
    |f(x) - \sum_{k \in {{\mathbb Z}}} f(2^{-n}k) f(2^nx \ominus k)| < \varepsilon \ \ \forall x \in {{{\mathbb R}}}_+
    $$
    for large enough $n$. Futhermore,
    $$
    |\sum_{k \in {{\mathbb Z}}} \{f(2^{-n}k) - (S^n(\delta))(k)\} f(2^n x \ominus k)| < \varepsilon \ \ \forall x \in {{{\mathbb R}}}_+,
    $$
    and from the condition of stability of the function $f(x)$ it follows, that
    $$
    |f(2^{-n}k) - (S^n(\delta))(k)| < const \cdot \varepsilon \ \ \forall k \in {{\mathbb Z}}
    $$
    and for large enough $n$, that is, we have established the convergence of the subdivision scheme on $\delta$-sequence, and therefore on all sequences from ${\ell}_\infty$.
\end{proof}

Even in the classical case, studying the refinement equation for the existence of a continuous solution is not easy. It is known \cite{NPS}, that in the classical case when the necessary conditions are met (\ref{eq.nec}), the refinement equation always has a generalized compactly supported solution. Moreover, this solution is unique up to normalization. \cite[theorem 2.4.4]{NPS}. However, it may not be continuous. The criterion of continuity of the solution was first obtained in the work \cite{CH}, based on the developed in \cite{DL} matrix method. In work \cite{RF} This method was generalized to dyadic refinement equations. The idea of the method is as follows: instead of studying a functional equation, one can go to an equation for a vector function, that is, from a refinement equation to the so-called self-similarity equation, which will be described below. The self-similarity equation is a special case of the equation for fractal curves. We will prove this result in the most general form, for arbitrary fractal curves. The next chapter is fully devoted to the question of generating fractal curves on a dyadic half-line.

\subsection{Fractal curves on the dyadic half-line}

For an arbitrary pair of affine operators in $ \mathbb R^N$, consider a functional equation for a vector function $\bv: [0,1] \to \mathbb R^N$
\begin{equation}\label{fractal curves}
            \bv(t) = \left\{ \begin{array}{cc}
            A_0 \bv(2t), & t \in [0, \frac{1}{2}), \\
            A_1 \bv(2t \ominus 1), & t \in [\frac{1}{2}, 1].
                             \end{array}\right.
        \end{equation}
This equation with binary compression of an argument in literature is called {\em self-similarity equation}, by analogy with the same equation on the segment [0, 1] with the usual addition \cite{NPS}. In the classical case, the name is justified by the fact that many famous fractal curves, such as the de Rham curve, the Koch curve, etc. are solutions of such equations. Indeed, it follows from the equation that the arc of a curve $\bv(t)$ between $t = 0$ and $t = \frac12$ affine-like (by operator $A_0$) the whole curve, and so is (by operator $A_1$) the second arc of a curve, from $t = \frac12$ to $t =1$. So, the point $ \bv(\frac{1}{2}) $ divides the curve $\{ \bv(t), t \in [0, 1] \}$  into two arcs, each of which is affine-like throughout the curve. A fractal curve is a continuous solution of the equation (\ref{fractal curves}) (with the usual addition). On the dyadic half-line we mean the W-continuous solution.

We formulate the conditions under which two affine operators generate a fractal curve. For this we need an auxiliary
\begin{theorem}\label{Norm}
    For any two operators $A_0$, $A_1$ in $ \mathbb R^N $ and for each
    $\varepsilon > 0$ there exists a norm in $\mathbb R^N$ such, that
$$
     (\rho - \varepsilon) \|x\| \leq \max (\| {A}_0 x \|, \| {A}_1 x \|) \leq (\rho + \varepsilon) \|x\|
$$
for any $ x \in \mathbb R^N $, where $\rho$ is a joint spectral radius of operators, namely:
$$
    \rho \ = \ \lim_{r \to \infty} \max_{(d_1, ..., d_r) \in { \{0, 1 \} }^r } { \| A_{d_1} \cdot \ldots \cdot A_{d_r} \| }^{ \frac{1}{r} }.
$$
\end{theorem}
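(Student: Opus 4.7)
My plan is to construct the required norm explicitly from iterated products of $A_0$ and $A_1$, following the classical construction of a near-extremal norm in joint spectral radius theory. Fix any reference norm $\|\cdot\|_0$ on $\mathbb R^N$ and set
$$
\|x\|_* \ := \ \sup_{r\geq 0}\ (\rho+\varepsilon)^{-r}\,\max_{(d_1,\ldots,d_r)\in\{0,1\}^r}\|A_{d_1}\cdots A_{d_r}x\|_0.
$$
The definition of $\rho$ guarantees that $\|A_{d_1}\cdots A_{d_r}\|_0\leq(\rho+\varepsilon/2)^r$ for all sufficiently large $r$, so the supremum is finite; the $r=0$ term gives $\|x\|_*\geq\|x\|_0$, securing positivity, while homogeneity and the triangle inequality are inherited termwise from $\|\cdot\|_0$. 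Hence $\|\cdot\|_*$ is a norm equivalent to $\|\cdot\|_0$.

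The upper estimate falls out of the construction: prepending $A_0$ or $A_1$ to any admissible index string shifts $r\mapsto r+1$ inside the defining supremum, at the price of one factor $(\rho+\varepsilon)$. Equivalently, $\|x\|_*=\max\bigl(\|x\|_0,\,(\rho+\varepsilon)^{-1}\max(\|A_0x\|_*,\|A_1x\|_*)\bigr)$, which immediately yields $\max(\|A_0x\|_*,\|A_1x\|_*)\leq(\rho+\varepsilon)\|x\|_*$.

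For the lower estimate I would argue by contradiction. If some unit vector $x$ satisfied $\max(\|A_0 x\|_*,\|A_1 x\|_*)<(\rho-\varepsilon)\|x\|_*$, then iterating this inequality would force $\|A_{d_1}\cdots A_{d_r}x\|_*<(\rho-\varepsilon)^r$ for every index string of every length. Combining this with a compactness argument on the unit sphere of $\|\cdot\|_*$ and the characterization of $\rho$ as the largest asymptotic exponential growth rate witnessed along any string contradicts the definition of $\rho$. The main obstacle is precisely this step: the upper bound is essentially a formal identity, but extracting a \emph{uniform} pointwise lower bound from the asymptotic quantity $\rho$ requires reconciling a supremum over arbitrarily long strings with a maximum over a single step of iteration. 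In degenerate situations, where the semigroup generated by $A_0$ and $A_1$ acts trivially on some invariant subspace not witnessing $\rho$, one may have to pass to an extremal invariant subspace or combine $\|\cdot\|_*$ with a dual construction using $\rho-\varepsilon$ in the exponent, before the pointwise lower bound holds on the whole space.
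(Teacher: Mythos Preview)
The paper does not supply its own proof of this theorem: it simply refers the reader to Rota--Strang \cite{RS}. Your construction of $\|\cdot\|_*$ is exactly the Rota--Strang near-extremal norm, and your argument for the upper bound $\max(\|A_0x\|_*,\|A_1x\|_*)\leq(\rho+\varepsilon)\|x\|_*$ is correct and complete. In that sense your approach coincides with what the paper invokes.

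The obstacle you identify for the lower bound is not a mere technicality---it is fatal for the inequality as stated. The pointwise estimate $(\rho-\varepsilon)\|x\|\leq\max(\|A_0x\|,\|A_1x\|)$ for \emph{every} $x\in\mathbb R^N$ is false in general for any norm: take $A_0=A_1=\bigl(\begin{smallmatrix}1&0\\0&0\end{smallmatrix}\bigr)$, so that $\rho=1$, yet $A_0e_2=A_1e_2=0$. What Rota--Strang actually establish is the operator-norm version $\rho\leq\max(\|A_0\|,\|A_1\|)\leq\rho+\varepsilon$, where the left inequality holds trivially for every norm because $\rho\leq\|A_{d_1}\cdots A_{d_r}\|^{1/r}$ for all $r$. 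The paper's subsequent argument (Theorem~\ref{Continuous fractal solution}) uses only the upper bound, so nothing downstream is affected; and your instinct that an irreducibility hypothesis (no common invariant subspace) is needed for any genuine pointwise lower estimate is exactly right---that hypothesis reappears in the second half of Theorem~\ref{Continuous fractal solution}, where the reverse H\"older inequality is claimed.
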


The proof of this theorem could be found in \cite{RS}.

Dyadic modulus of continuity of a function $f$ is a number $ \bw(f, \delta) $ such, that
$$
\forall \delta > 0 \ \ \bw(f, \delta) = \displaystyle{ {\sup}_{x, y \in [0, 2^{n-1}), 0 \leq y < \delta} } \{ \| f(x \oplus y) - f(x) \| \}
$$

If $ \bw(f, 2^{-n}) \leq const \cdot 2^{- \alpha n} $ for some $\alpha > 0$, then $ \bw(f, \delta) \leq const(f, \alpha) \cdot \delta^{\alpha}$

Dyadic Hölder exponent ${\alpha}_{f}$ of a function $f$ is:

$$
{\alpha}_{f} = \sup_{\alpha > 0} \{\alpha: \bw(f, \delta) \leq const(f, \alpha) \cdot \delta^\alpha \}.
$$

Let $A_0$, $A_1$ $\in Aff({{\mathbb R}}^N)$ и $\|A_0\| \leq \rho$, $\|A_1\| \leq \rho$, $\rho \in (0,1)$. Then

\begin{theorem}\label{Continuous fractal solution}
    If $\rho(A_0, A_1) <1$, then equation~(\ref{fractal curves}) has a continuous solution $\bv(t)$. Wherein,  ${\alpha}_{\bv} \geq - \log_2 \rho$. If there are no common affine subspaces of operators $ A_0, A_1 $, then ${\alpha}_{\bv} = - \log_2 \rho$.
\end{theorem}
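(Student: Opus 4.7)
The plan is to construct $\bv$ explicitly from binary expansions, verify the self-similarity equation and $W$-continuity, derive the lower Hölder bound from the contractive norm supplied by Theorem~\ref{Norm}, and finally obtain the matching upper bound under the non-degeneracy hypothesis by translating Hölder regularity of $\bv$ back into operator-norm bounds on arbitrary products of the linear parts of $A_0, A_1$.

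First I would apply Theorem~\ref{Norm} with $\varepsilon > 0$ small enough that $\rho + \varepsilon < 1$, fixing a norm on $\mathbb{R}^N$ in which the linear parts $L_0, L_1$ of $A_0, A_1$ satisfy $\|L_i x\| \le (\rho + \varepsilon)\|x\|$. For $t \in [0,1]$ with binary expansion $t = 0.d_1 d_2 d_3 \ldots$ (terminating at dyadic rationals), I set
$$
\bv(t) \;=\; \lim_{n \to \infty} A_{d_1} A_{d_2} \cdots A_{d_n}\, x_0
$$
for any fixed $x_0 \in \mathbb{R}^N$. Writing $A_i x = L_i x + b_i$, consecutive differences of this sequence take the form $L_{d_1} \cdots L_{d_{n-1}}(A_{d_n} x_0 - x_0)$ and are bounded by $C (\rho + \varepsilon)^{n-1}$, so the sequence is Cauchy and its limit is independent of $x_0$. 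Peeling off $d_1$ and using continuity of $A_{d_1}$ produces the self-similarity equation~(\ref{fractal curves}) directly.

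For $W$-continuity and the Hölder estimate, the decisive point is that $\|t \ominus t'\| < 2^{-n}$ forces $t$ and $t'$ to share their first $n$ binary digits, so iterating $A_i x - A_i y = L_i(x - y)$ gives
$$
\bv(t) - \bv(t') \;=\; L_{d_1} \cdots L_{d_n}\bigl(\bv(\sigma^n t) - \bv(\sigma^n t')\bigr),
$$
where $\sigma^n$ shifts the binary expansion by $n$ places. Boundedness of $\bv$ on $[0,1]$ together with $\|L_{d_1} \cdots L_{d_n}\| \le (\rho + \varepsilon)^n$ yields $\bw(\bv, 2^{-n}) \le C (\rho + \varepsilon)^n$, hence $\alpha_{\bv} \ge -\log_2(\rho + \varepsilon)$ for every $\varepsilon > 0$, and therefore $\alpha_{\bv} \ge -\log_2 \rho$.

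The hard part will be the matching upper bound. My plan is to show that the affine hull $H$ of $\bv([0,1])$ is invariant under both $A_0$ and $A_1$, using that $\bv([0, \frac{1}{2})) = A_0 \bv([0,1))$ and $\bv([\frac{1}{2}, 1]) = A_1 \bv([0,1])$ both lie in $H$, together with the observation that $\bv(1) = \lim_n A_1^n x_0$ is a limit of values at dyadic rationals and therefore already belongs to the affine hull of $\bv([0,1))$. The no-common-invariant-affine-subspace hypothesis then forces $H = \mathbb{R}^N$, so the differences $\{\bv(s) - \bv(s') : s, s' \in [0,1]\}$ linearly span $\mathbb{R}^N$. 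If $\bw(\bv, 2^{-n}) \le C q^n$ for some $q < \rho$, then applying the displayed identity to pairs $t = 0.d_1 \ldots d_n s$, $t' = 0.d_1 \ldots d_n s'$ obtained by prefixing arbitrary $s, s'$ with a fixed digit string yields $\|L_{d_1} \cdots L_{d_n}(\bv(s) - \bv(s'))\| \le C q^n$. Expressing any fixed basis of $\mathbb{R}^N$ as a finite combination of such differences produces a uniform bound $\|L_{d_1} \cdots L_{d_n}\| \le C' q^n$, forcing the joint spectral radius to satisfy $\rho \le q$, contradicting $q < \rho$. The most delicate step I anticipate is precisely this passage from a pointwise Hölder estimate to a uniform bound on all products $L_{d_1} \cdots L_{d_n}$, which relies essentially on the affine-hull invariance and the non-degeneracy hypothesis.
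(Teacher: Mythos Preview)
Your argument is correct and, in several respects, cleaner than what appears in the paper. Two differences are worth noting. For the lower Hölder bound, the paper telescopes: given dyadic rationals $x,y$ it flips one differing binary digit at a time and bounds each jump by $\|A_{d_1}\cdots A_{d_{p-1}}\|\cdot\|A_1\bv_0 - A_0\bv_0\|$, summing a geometric series indexed by the positions of disagreement. You instead exploit the specifically dyadic fact that $|t\ominus t'|<2^{-n}$ forces the first $n$ digits to coincide, so a single factorisation $\bv(t)-\bv(t')=L_{d_1}\cdots L_{d_n}\bigl(\bv(\sigma^n t)-\bv(\sigma^n t')\bigr)$ already gives the bound. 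Your route is shorter and avoids the somewhat informal telescoping in the paper; the paper's route has the advantage of transferring verbatim from the classical real-line case, where close points need not share leading digits. For the upper bound the paper simply invokes the analogous argument in \cite{NPS}; your explicit proof via invariance of the affine hull of $\bv([0,1])$ and the resulting uniform bound $\|L_{d_1}\cdots L_{d_n}\|\le C'q^n$ is exactly the standard mechanism behind that reference. One small point to tidy: the endpoint $t=1$ does not have an expansion of the form $0.d_1d_2\ldots$, so define $\bv(1)$ directly as the fixed point of $A_1$ and check the self-similarity equation there separately; your observation that $\bv(1)=\lim_n A_1^n\bv(0)$ with $A_1^n\bv(0)\in\bv([0,1))$ then closes the affine-hull step since finite-dimensional affine subspaces are closed.
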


\begin{proof}
    Consider binary rational $x$ and $y$ and without loss of generality we assume, that $x, y \in (0, 1)$ и $x < y$. Let us evaluate the norm $\|\bv(x) - \bv(y)\|$. Consider the set $P \subset {{\mathbb N}}$ of indices $p \in {{\mathbb N}}$ such, that $p$-th digits in the binary decomposition $x$ and $y$ are different. The set $P$ is finite due to $x$ and $y$ being binary rational. Let ${p}_0$ be the smallest element of $P$; it is obvious, that $|y \ominus x| \geq 2^{-{p}_0}$. Then
    $$
    \|\bv(y) - \bv(x)\| \ = \ \|\sum_{p \in {{\mathbb P}}} \bv(0.{d}_1 \ldots {d}_{p-1}1) - \bv(0.{d}_1 \ldots {d}_{p-1}0)\| \leq
    $$
    $$
    \leq \sum_{p \in {{\mathbb P}}} \| \bv(0.{d}_1 \ldots {d}_{p-1}1) - \bv(0.{d}_1 \ldots {d}_{p-1}0) \| \ =
    \sum_{p \in {{\mathbb P}}} \|{A}_{{d}_1} \ldots {A}_{{d}_{p-1}} ({A}_1 {\bv}_0 - {A}_0 {\bv}_0)\| \leq
    $$
    $$
    \leq \ \sum_{p \in {{\mathbb P}}} \| {A}_{{d}_1} \ldots {A}_{{d}_{p-1}} \| \cdot \| {A}_1 {\bv}_0 - {A}_0 {\bv}_0 \| \ \leq \
    \sum_{p \in {{\mathbb P}}} {( \rho({A}_0, {A}_1) + \varepsilon )}^{p-1} \cdot \| {A}_1 {\bv}_0 - {A}_0 {\bv}_0 \|,
    $$
    where ${\rho({A}_0, {A}_1)}$ is a joint spectral radius of operators ${A}_0$ и ${A}_1$. Let $ \sigma $ be a sum of a finite number series $\sum_{p \in {{\mathbb P}}} {( \rho({A}_0, {A}_1) + \varepsilon )}^{p-1}$. Then there is a constant ${C}_1$ such, that $ {C}_1 = \frac{\sigma}{ {( \rho({A}_0, {A}_1) + \varepsilon )}^{ {p}_0 - 1} } $. Let us denote $ {C}_2 = \| {A}_1 {\bv}_0 - {A}_0 {\bv}_0 \| = \| {A}_1 {\bv}_0 - {\bv}_0 \| $. Let $ C = {C}_1 \cdot {C}_2 $. We have:
    $$
    \sum_{p \in {{\mathbb P}}} {( \rho({A}_0, {A}_1) + \varepsilon )}^{p-1} \cdot \| {A}_1 {\bv}_0 - {A}_0 {\bv}_0 \| = C \cdot \frac{{( \rho({A}_0, {A}_1) + \varepsilon )}^{ {p}_0 }}{\rho({A}_0, {A}_1) + \varepsilon }
    $$
    Consequently,
    $$
    \|\bv(y) - \bv(x)\| \leq \tilde{C} {|y \ominus x|}^{-\log_2{\rho(A_0, A_1) + \varepsilon }},
    $$
    where $ \tilde{C} = \frac{C}{\rho({A}_0, {A}_1) + \varepsilon} $ and function $\bv(t)$ is uniformly continuous at binary-rational points of the interval (0,1), and, therefore, on the segment [0,1]. From the evaluation above it also follows, that
    $$
    {\alpha}_{\bv} \geq -\log_2{\rho({A}_0, {A}_1)}.
    $$

    If for the operators $ A_0, \ A_1 $ there are no common affine subspaces, then the reverse inequality is proved in a similar way (the proof almost literally coincides with similar arguments in the proof of theorem 5.1.4 of \cite{NPS}). Due to the fact that $ \varepsilon $ can be arbitrarily small, we obtain
    $$
    {\alpha}_{\bv} = -\log_2 \rho(A_0, A_1).
    $$

\end{proof}

Note that in the classical case the Theorem~\ref{Continuous fractal solution} is false.
For the existence of a continuous solution on the classical real line, an additional condition is necessary, sometimes called {\em cross condition} or {\em Barnsley condition}:   $A_0 {\bv}_1 = A_1 {\bv}_0$, where ${\bv}_0, {\bv}_1$ are the fixed points of operators $A_0, A_1$ from (\ref{fractal curves}) respectively. This condition eliminates the possibility of the function $\bv(t)$ to be discontinuous at the point $t = \frac{1}{2}$. In the dyadic case, $W$-continuity admits the existence of discontinuities at binary rational points, so that in the dyadic case this condition is not required.

\subsection{Spectral properties of subdivision schemes}

Now we apply the results of Section 3 on fractal curves to study the convergence of refinement schemes. Operator $S$ possesses two $ N \times N $ matrices, where $ N = 2^{n-1}, $ \\
$$
{T_0}_{ij} = c_{2(i-1)\oplus(j-1)}\, , \qquad
{T_1}_{ij} = c_{(2i-1)\oplus(j-1)}\, , \qquad  1 \leq i, j \leq N = 2^{n-1}
$$

We represent the explicit form of these matrices for $n = 3$:
$$
T_0 \ = \ 
\begin{pmatrix}
  c_0 & c_1 & c_2 & c_3 \\
  c_2 & c_3 & c_0 & c_1 \\
  c_4 & c_5 & c_6 & c_7 \\
  c_6 & c_7 & c_4 & c_5
\end{pmatrix}
$$

$$
T_1 \ = \
\begin{pmatrix}
  c_1 & c_0 & c_3 & c_2 \\
  c_3 & c_2 & c_1 & c_0 \\
  c_5 & c_4 & c_7 & c_6 \\
  c_7 & c_6 & c_5 & c_4
\end{pmatrix}
$$

By conditions (\ref{eq.nec}) for ${T}_0$, ${T}_1$ there is a common invariant affine subspace $W = \{ x \in {{{\mathbb R}}}^N: \sum_{i=1}^{N} x_i = 1 \}$. On this subspace of the matrices $T_0, \, T_1$ define affine operators.
We denote $ V \subset W $ as a smallest common affine subspace containing eigenvector $T_0$, corresponding to eigenvalue 1, and ${T}_0{|}_V = {A}_0$, \ ${T}_1{|}_V = {A}_1$.

Now we return to the refinement equation and apply the method described in \cite{DL}. So, for the refinement equation $\varphi(t) = \sum_{k=0}^N c_k \varphi(2t \ominus k)$ we define a vector-function $ \bv(t) = ( \varphi(t), \  \varphi(t+1), \  ..., \ \varphi(t-N+1) ) \in \mathbb R^N$ at arbitrary $ t $, besides,

\begin{equation}
            \bv(t) = \left\{ \begin{array}{cc}
            T_0 \bv(2t), & t \in [0, \frac{1}{2}], \\
            T_1 \bv(2t \ominus 1), & t \in [\frac{1}{2}, 1].
                             \end{array}\right.
\end{equation}

Since $W$ is invariant with respect to operators $ T_0, T_1 $, the function $ \bv $ also satisfies

\begin{equation}
            \bv(t) = \left\{ \begin{array}{cc}
            A_0 \bv(2t), & t \in [0, \frac{1}{2}], \\
            A_1 \bv(2t \ominus 1), & t \in [\frac{1}{2}, 1],
                             \end{array}\right.
\end{equation}

which combined with the Theorem \ref{Continuous fractal solution} leads to the following result:

\begin{theorem}\label{Continuous refinement equation solution}
    The refinement equation $\varphi(t) = \sum_{k=0}^{N} {c}_k \varphi(2t - k)$ possesses a continuous solution if and only if $\rho(A_0,A_1) < 1$.
    Moreover, $\alpha_{\varphi} = - \log_2 \rho(A_0, A_1)$.
\end{theorem}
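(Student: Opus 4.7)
The plan is to reduce the statement to Theorem~\ref{Continuous fractal solution} via the vector function $\bv(t) = (\varphi(t), \varphi(t+1), \ldots, \varphi(t-N+1))$ introduced just above. Since $\varphi$ has compact support and $\bv$ collects the values of $\varphi$ at its relevant unit shifts, $\varphi$ is $W$-continuous on $\mathbb{R}_+$ if and only if $\bv$ is $W$-continuous on $[0,1]$. The whole problem therefore reduces to deciding when the self-similarity equation~(\ref{fractal curves}) with the affine operators $A_0 = T_0|_V$, $A_1 = T_1|_V$ admits a continuous solution on the affine subspace $V$.

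For the sufficiency direction, I would assume $\rho(A_0,A_1) < 1$ and invoke Theorem~\ref{Continuous fractal solution} directly. It produces a $W$-continuous solution $\bv$ of~(\ref{fractal curves}) with Hölder exponent at least $-\log_2 \rho(A_0,A_1)$. Extracting its first coordinate and using the remaining coordinates to patch the values of $\varphi$ on successive unit intervals gives a $W$-continuous $\varphi$ satisfying the refinement equation, with $\alpha_\varphi \geq -\log_2 \rho(A_0,A_1)$.

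For the necessity direction, suppose a $W$-continuous $\varphi$ exists, so $\bv$ is uniformly $W$-continuous on $[0,1]$. Iterating the self-similarity equation along the binary digits of $t = 0.d_1 d_2 \ldots$ yields
$$
\bv(t) = A_{d_1} A_{d_2} \cdots A_{d_n} \bv(t_n), \qquad t_n \in [0,1],
$$
so the oscillation of $\bv$ on a dyadic cylinder of depth $n$ equals $\sup_{s,r} \| A_{d_1}^{\mathrm{lin}} \cdots A_{d_n}^{\mathrm{lin}} (\bv(s) - \bv(r)) \|$, where the superscript denotes the linear part of the affine operator. Uniform $W$-continuity forces these suprema to tend to $0$ uniformly in $(d_1,\ldots,d_n)$; by the joint spectral radius formula, this is equivalent to $\rho(A_0,A_1) < 1$, \emph{provided} the linear hull $L$ of the differences $\bv(s) - \bv(r)$ equals the linear part of $V$. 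The main obstacle is exactly this last point, and I would resolve it by observing that $L$ is invariant under the linear parts of $A_0, A_1$, and that the affine subspace $\bv(0) + L$ is a common invariant affine subspace of $A_0, A_1$ (using that $\bv(0)$ is the fixed point of $A_0$ and that $A_1 \bv(0) = \bv(1/2) \in \bv(0) + L$). By the minimality built into the definition of $V$ as the smallest such subspace containing the $1$-eigenvector of $T_0$, one gets $\bv(0) + L = V$, hence $L$ equals the linear part of $V$, and the uniform contraction of the products on $L$ becomes uniform contraction on the whole ambient space.

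The equality $\alpha_\varphi = -\log_2 \rho(A_0,A_1)$ then follows from the second half of Theorem~\ref{Continuous fractal solution}: the same minimality of $V$ guarantees that $A_0, A_1$ admit no proper common invariant affine subspace, giving $\alpha_{\bv} = -\log_2 \rho(A_0,A_1)$, which transfers to $\varphi$ since $\varphi$ is assembled from the coordinates of $\bv$.
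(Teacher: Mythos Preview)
Your approach is exactly the paper's: the paper does not give a separate proof of this theorem but simply states that the vector function $\bv(t)=(\varphi(t),\varphi(t+1),\ldots,\varphi(t-N+1))$ satisfies the self-similarity equation with $A_0=T_0|_V$, $A_1=T_1|_V$, and then declares the result to be a consequence of Theorem~\ref{Continuous fractal solution}. Your write-up is in fact more detailed than the paper's, particularly in the necessity direction and in the use of the minimality of $V$ to obtain the exact H\"older exponent, both of which the paper leaves implicit.
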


\begin{corollary}
    If $\rho(T_0|_{W},T_1|_{W} ) < 1$, then the solution of the refinement equation is continuous and
    $\alpha_{\varphi} \ge - \log_2 \rho (T_0|_{W},T_1|_{W} )$.
\end{corollary}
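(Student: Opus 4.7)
The plan is to reduce the corollary directly to Theorem~\ref{Continuous refinement equation solution}, using only the observation that the joint spectral radius is monotone under restriction to a common invariant subspace. Since by the construction preceding that theorem $V\subset W$ and the affine operators $A_0=T_0|_V,\ A_1=T_1|_V$ are literally the further restrictions of $T_0|_W,\ T_1|_W$ to the invariant affine subspace $V$, this monotonicity will yield $\rho(A_0,A_1)\le\rho(T_0|_W,T_1|_W)$, after which the conclusion falls out of the earlier theorem.

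For the monotonicity step I would fix any norm on the linear direction of $W$; its restriction gives a norm on the linear direction of $V$. For an arbitrary word $(d_1,\dots,d_r)\in\{0,1\}^r$, the composition $A_{d_1}\cdots A_{d_r}$ on $V$ is the restriction of $T_{d_1}|_W\cdots T_{d_r}|_W$ to $V$, hence its operator norm on $V$ is bounded by the operator norm of the latter on $W$. Taking the $r$-th root, the maximum over all words of length $r$, and then the limit $r\to\infty$ in the definition of the joint spectral radius (as recalled in Theorem~\ref{Norm}) yields
$$
\rho(A_0,A_1)\ \le\ \rho(T_0|_W,T_1|_W)\ <\ 1.
$$
With this inequality in hand, Theorem~\ref{Continuous refinement equation solution} provides a continuous solution $\varphi$ of the refinement equation together with the identity $\alpha_\varphi=-\log_2\rho(A_0,A_1)$; monotonicity of $-\log_2$ then gives $\alpha_\varphi\ge-\log_2\rho(T_0|_W,T_1|_W)$, which is the desired bound.

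The only delicate point I foresee is bookkeeping around the fact that $T_0|_W$ and $T_1|_W$ are affine rather than linear operators. This is handled by the convention, implicit already in Theorem~\ref{Norm}, that the joint spectral radius of affine operators is defined via their linear parts, which are genuine linear maps on the directions of $W$ and $V$. Under this convention the affine inclusion $V\subset W$ becomes a linear inclusion of their directions, and the monotonicity step above reduces to the standard argument for linear operators; no further obstacle is expected.
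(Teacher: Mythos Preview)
Your proposal is correct and follows exactly the route the paper intends: the corollary is stated without a separate proof because it is an immediate consequence of Theorem~\ref{Continuous refinement equation solution} together with the monotonicity $\rho(A_0,A_1)\le\rho(T_0|_W,T_1|_W)$ coming from the inclusion $V\subset W$. Your remark about handling the affine operators via their linear parts is appropriate and matches the implicit convention used throughout Sections~3--4.
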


\begin{remark}
    The corollary 1 has been proven also in\cite{RF}.
\end{remark}

Theorem \ref{Continuous refinement equation solution} is the consequence of Theorem \ref{Continuous fractal solution} and it is a criterion that the refinement equation has a continuous solution in terms of operators ${A}_0$, ${A}_1$. It allows us to establish whether the limit function of the subdivision scheme is continuous without finding its explicit form. And, accordingly, if the limit function is discontinuous, then there is no point in speaking about the convergence of the scheme.

So we first switched from the functional equation (\ref{refinement equation}) to the equation for the vector function(\ref{fractal curves}), figured out under what conditions it has a continuous solution (Theorem \ref{Continuous fractal solution}), then went back to the refinement equation (Theorem \ref{Continuous refinement equation solution}). Now it is easy to verify whether the solution is stable and, applying the Theorem \ref{Sufficient convergence condition}, find out if the corresponding subdivision scheme converges. We ended up with a convergence theorem for subdivision schemes, which is fairly easy to use.

Next, we consider some special cases of subdivision schemes.

\subsection{Dyadic subdivision schemes with positive coefficients}

We start with the subdivision schemes defined by positive coefficients. To begin with we will study some of their combinatorial properties. A matrix is called {\em stochastic in columns (rows)} if it is non-negative and the sum of all elements of each column (row) equals one. The following auxiliary result is well known in the theory of Markov chains. We give his proof for the convenience of the reader.

Recall that $ W \ = \ \{ x: \displaystyle{\sum_{k \in \mathbb Z} x_k = 1} \} $, $ \Delta \ = \ \{ x \in W: x \geq 0 \} $ is a unit simplex.

\begin{theorem}\label{Dyadic Stochastic}
    If $A$ is column-stochastic and possesses at least one positive row, and $x$, $y$ are the elements of $ \Delta $, then there exists $ q \in (0,1) $ such, that
    $$
    {\|Ax - Ay\|}_{{\ell}_1} \leq q{\|x - y\|}_{{\ell}_1}.
    $$
\end{theorem}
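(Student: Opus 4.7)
The plan is to reduce this $\ell_1$-inequality to a classical Markov-chain (Dobrushin-type) contraction argument based on the total-variation identity. First I would set $z = x - y$; since $x, y \in \Delta$ the coordinates of $z$ sum to zero, so its positive and negative parts $z = z^+ - z^-$ satisfy $\|z^+\|_1 = \|z^-\|_1 = \tfrac12 \|z\|_1$. Assuming $z \neq 0$ (otherwise the inequality is trivial), the vectors $u = 2 z^+/\|z\|_1$ and $v = 2 z^-/\|z\|_1$ both lie in $\Delta$, and $x - y = \tfrac12 \|z\|_1 (u - v)$. By linearity of $A$, the claim $\|Ax - Ay\|_1 \le q \|x - y\|_1$ reduces to showing the uniform bound $\|Au - Av\|_1 \le 2q$ for every pair $u, v \in \Delta$. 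This reduction is the key step — it turns the problem into one about the diameter of $A(\Delta)$.

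Second, I would use the total-variation identity: for any two probability vectors $p, q \in \Delta$,
$$
\|p - q\|_1 \ = \ 2\Bigl(1 - \sum_i \min(p_i, q_i)\Bigr).
$$
Because $A$ is column-stochastic, it maps $\Delta$ into $\Delta$, so $Au, Av \in \Delta$ and the identity applies. Thus showing $\|Au - Av\|_1 \le 2q$ uniformly is equivalent to producing a uniform lower bound $\sum_i \min((Au)_i, (Av)_i) \ge 1 - q > 0$ that does not depend on $u$ or $v$.

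Finally, the positive-row hypothesis supplies exactly such a bound. Let $i_0$ be the index of a strictly positive row of $A$ and set $\alpha = \min_j A_{i_0, j} > 0$. For every $w \in \Delta$,
$$
(Aw)_{i_0} \ = \ \sum_j A_{i_0, j}\, w_j \ \ge \ \alpha \sum_j w_j \ = \ \alpha,
$$
so $\min((Au)_{i_0}, (Av)_{i_0}) \ge \alpha$ irrespective of $u, v \in \Delta$. Keeping just the $i_0$ term yields $\sum_i \min((Au)_i, (Av)_i) \ge \alpha$, hence $\|Au - Av\|_1 \le 2(1 - \alpha)$, and unwinding the reduction gives $\|Ax - Ay\|_1 \le (1 - \alpha)\|x - y\|_1$, i.e. the theorem holds with $q = 1 - \alpha \in (0,1)$.

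The argument is fairly rigid, and there is no real analytical obstacle; the only non-obvious step is recognizing that the total-variation identity is the correct bridge between the $\ell_1$-norm on $W$ and a scalar lower bound, because it is precisely this reformulation that allows a single positive row of $A$ to furnish the contractive gap.
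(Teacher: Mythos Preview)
Your proof is correct and yields the same contraction constant $q = 1 - \alpha$ as the paper (the paper writes $m$ for your $\alpha$), but the route is genuinely different. The paper argues geometrically: since $\|Az\|_{\ell_1}$ is convex in $z$ and the set $\{x-y : x,y\in\Delta\}$ has extreme points $e_i - e_j$, it suffices to bound $\|Ae_i - Ae_j\|_{\ell_1}$, i.e.\ the $\ell_1$-distance between two columns of $A$; that distance is then estimated directly from the presence of a common mass $\ge m$ in row $i_0$. You instead perform the Jordan decomposition $z = z^+ - z^-$ to renormalize to $u,v\in\Delta$, and then invoke the total-variation identity $\|p-q\|_{\ell_1} = 2\bigl(1-\sum_i \min(p_i,q_i)\bigr)$ so that the positive row immediately supplies a uniform overlap $\ge\alpha$. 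Your argument is the classical Dobrushin-coefficient proof and is cleaner and fully self-contained; the paper's extreme-point reduction is a bit terse (one must check that the extreme points of $\Delta-\Delta$ really are the $e_i-e_j$), but has the virtue of making explicit that the worst case is attained at pairs of columns of $A$. One tiny cosmetic point: in the degenerate case $\alpha = 1$ (all columns equal $e_{i_0}$) your formula gives $q=0\notin(0,1)$, but then $Ax=Ay$ identically and any $q\in(0,1)$ works, so the statement still holds.
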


\begin{proof}
    Consider the function $\|x - y\|_{\ell_1} = f(x,y)$: on a compact set $ \Delta $ its maximum is reached at extreme points (because $ f(x, y) $ is a convex function), that is, points that are not the midpoints of any segments in  this set. Consequently, to prove that the distance between two points decreases under the action of a certain matrix, it is sufficient to prove that it decreases for the extreme points. Without loss of generality, let $x$ be the first basis vector in $W$ (with a 1 in the first place and zeros on all others), $y$ be the second one (with a 1 in second place and zeros on all others), and $\|x - y\|_{\ell_1} = 2$. Multiplying the matrix $ A $ by each of them, we get the first and second columns of it, respectively.
    It is necessary to prove that $\|Ax - Ay\|_{\ell_1} < 2$. Let $i$ be a number of the positive row of $A$, and $m < 1$ is the smallest element in the row. Element $Ax$ has at least $m$ on the $i$-th place. So does $Ay$. Consider $Ax$ and represent it as
    $$
    Ax \ = \ {x}_i + \sum_{j \neq i} {x}_j, \ {x}_i \geq m,
    $$
    where elements $ x_j $ have a non-zero component only on $ j $-th place.
    Similarly for $Ay$. For each $j \neq i$ it holds that
    $$
    |({Ax})_j - ({Ay})_j| \leq {x}_j - {y}_j.
    $$
    In the $i$-th row:
    $$
    |({Ax})_i - ({Ay})_i| \leq {x}_i - {y}_i - 2m.
    $$
    Consequently,
    $$
    {\| Ax - Ay \|}_{\ell_1} \leq \sum_{i \in {{\mathbb N}}} |{x}_i - {y}_i| - 2m \ = \ 2 - 2m,
    $$
    that is, the norm has changed $ 1-m $ times, $1 > 1-m > 0$.
\end{proof}

This fact allows us to prove the convergence of dyadic subdivision schemes with a positive mask with the help of the following property of convergence of schemes, the proof of which the reader can find in \cite{DynL} (in the case of the dyadic half-line, it completely repeats the classical analogue).

\begin{theorem}\label{Convergence property}
    If there is a norm in which the operators $T_0, \ T_1$ are contractions, then the subdivision scheme converges.
\end{theorem}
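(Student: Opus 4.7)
The plan is to reduce, via Lemma \ref{Delta}, to proving convergence on the $\delta$-sequence alone, and then to show that the piecewise linear interpolants $f_n$ of $S^n\delta$ on the dyadic grid $2^{-n}\mathbb{Z}_+$ form a Cauchy sequence in the uniform norm. Since a uniform limit of $W$-continuous functions is $W$-continuous, such a Cauchy limit will be the sought limit function $f$, and the relation $\|f(2^{-n}\cdot)-(S^n\delta)(\cdot)\|_\infty\to 0$ then follows from the definition of $f_n$ as the interpolant of $S^n\delta$.

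To estimate $\|f_{n+1}-f_n\|_\infty$ I would exploit the block structure behind the matrices $T_0, T_1$ introduced in Section 4. I would group the values $(S^n\delta)(j)$ into blocks of length $N=2^{n-1}$ and view each block as a vector in $\mathbb R^N$. Unrolling the definition $(Sa)(\alpha)=\sum_\beta c_{\alpha\ominus 2\beta}\,a_\beta$ block by block, and using that the digitwise $\ominus$ on $\{0,\ldots,N-1\}$ permutes indices in precisely the way prescribed by the rows of $T_0, T_1$, shows that the two sub-blocks produced on the next level are $T_0 \bv_n(k)$ and $T_1 \bv_n(k)$. By induction, the block indexed by a binary word $(d_1,\ldots,d_r)$ equals $T_{d_r}\cdots T_{d_1}\bv_0$, where $\bv_0$ depends only on the mask.

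With this formula in place, the restriction of $f_{n+1}-f_n$ to any fixed dyadic interval is the image under a product $T_{d_1}\cdots T_{d_n}$ of a single vector encoding the one-step change of the scheme at the initial level. Under the hypothesis that $T_0, T_1$ are contractions in a common norm with $\max(\|T_0\|,\|T_1\|)\le q<1$, one obtains $\|T_{d_1}\cdots T_{d_n}\|\le q^n$; by equivalence of norms on $\mathbb R^N$ this gives $\|f_{n+1}-f_n\|_\infty\le C q^n$, and the geometric bound on $\sum_n\|f_{n+1}-f_n\|_\infty$ shows $\{f_n\}$ is uniformly Cauchy, which is what was needed.

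The main obstacle is the bookkeeping step that identifies the two-scale relation of $S$ with the action of $T_0, T_1$ on blocks: one must check that the reshuffling coming from $c_{\alpha\ominus 2\beta}$, with the digitwise $\ominus$ of $\mathbb R_+$ instead of ordinary subtraction, matches the row/column pattern ${T_0}_{ij}=c_{2(i-1)\oplus(j-1)}$, ${T_1}_{ij}=c_{(2i-1)\oplus(j-1)}$ displayed in Section 4, and that the block boundaries align correctly after each refinement. Once this combinatorial identification is made precise on the dyadic half-line, the remaining contraction argument is a direct adaptation of the classical one from \cite{DynL}.
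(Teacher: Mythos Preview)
The paper does not actually give a proof of this theorem: it simply refers the reader to \cite{DynL} and remarks that the dyadic argument ``completely repeats the classical analogue.'' Your sketch is exactly that classical argument---reduction to the $\delta$-sequence, identification of the block structure of $S^n\delta$ with products $T_{d_r}\cdots T_{d_1}$, and a geometric Cauchy estimate for $\|f_{n+1}-f_n\|_\infty$---so in spirit you are doing precisely what the paper intends, and the only nontrivial adaptation you correctly flag is the combinatorial check that the dyadic index map $\alpha\mapsto\alpha\ominus 2\beta$ reproduces the row pattern of $T_0,T_1$.

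There is, however, one point where your write-up is looser than it should be. You phrase the hypothesis as $\max(\|T_0\|,\|T_1\|)\le q<1$ for the full operator norms on $\mathbb R^N$. But whenever the necessary condition~(\ref{eq.nec}) holds, $T_0$ and $T_1$ are column-stochastic and hence have $1$ as an eigenvalue, so they are never contractions on all of $\mathbb R^N$. In the paper the theorem is invoked immediately after Theorem~\ref{Dyadic Stochastic}, which only yields $\|T_ix-T_iy\|_{\ell_1}\le q\|x-y\|_{\ell_1}$ for $x,y\in\Delta\subset W$; that is, the contraction is on the affine hyperplane $W=\{\sum x_i=1\}$, equivalently on its linear part $\{\sum x_i=0\}$. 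Your Cauchy estimate still goes through, because the ``one-step change'' vectors you apply the products to are differences of vectors in $W$ and therefore lie in that linear part---but you need to say this explicitly rather than invoke a bound on the full operator norm that is never available.
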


\begin{theorem}
    The subdivision scheme with a strictly positive mask converges.
\end{theorem}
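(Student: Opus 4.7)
The plan is to apply Theorem \ref{Convergence property} by exhibiting a norm under which $T_0|_W$ and $T_1|_W$ are contractions; Theorem \ref{Dyadic Stochastic} will supply this norm, namely the $\ell_1$-norm on the tangent space of $W$.

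First, since convergence of the scheme forces the necessary conditions $\sum_k c_{2k}=\sum_k c_{2k+1}=1$ of Theorem \ref{Necessary convergence condition}, I may assume these relations hold (otherwise the claim is vacuous). Reading off the explicit form ${T_0}_{ij}=c_{2(i-1)\oplus(j-1)}$ and ${T_1}_{ij}=c_{(2i-1)\oplus(j-1)}$, each column of either matrix enumerates exactly one of the two coefficient families $\{c_{2k}\}$ or $\{c_{2k+1}\}$, depending on the parity of $j-1$, so both matrices are column-stochastic. Strict positivity of the mask then makes every entry of $T_0$ and $T_1$ strictly positive, so in particular each has a positive row, and Theorem \ref{Dyadic Stochastic} yields constants $q_0,q_1\in(0,1)$ with $\|T_i x-T_i y\|_{\ell_1}\leq q_i\|x-y\|_{\ell_1}$ for all $x,y\in\Delta$ and $i=0,1$.

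Second, I extend this contraction estimate from the simplex $\Delta$ to the full tangent space $W-W=\{z\in\mathbb{R}^N:\sum_i z_i=0\}$, which is the space on which contractivity is actually needed. For any nonzero $z\in W-W$, the positive/negative-part decomposition $z=z^+-z^-$ gives $s:=\|z^+\|_{\ell_1}=\|z^-\|_{\ell_1}=\tfrac12\|z\|_{\ell_1}$, so $x:=z^+/s$ and $y:=z^-/s$ lie in $\Delta$ with $z=s(x-y)$. Linearity of $T_i$ together with the previous estimate then gives $\|T_iz\|_{\ell_1}=s\|T_ix-T_iy\|_{\ell_1}\leq q_i\|z\|_{\ell_1}$. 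Setting $q=\max(q_0,q_1)<1$ shows that both $T_0|_W$ and $T_1|_W$ act as $\ell_1$-contractions on $W-W$, and Theorem \ref{Convergence property} then delivers convergence of the subdivision scheme.

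Every ingredient is already in place, so I do not expect a genuine obstacle; the one nontrivial step is the passage from Theorem \ref{Dyadic Stochastic}, which controls only differences of simplex points, to contraction on the whole tangent space, and this is handled cleanly by the positive/negative-part decomposition above.
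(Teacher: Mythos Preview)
Your proof is correct and follows the paper's approach exactly: column-stochasticity together with a positive row yields an $\ell_1$-contraction via Theorem~\ref{Dyadic Stochastic}, and then Theorem~\ref{Convergence property} gives convergence; your observation that \emph{every} entry of $T_0,T_1$ is positive is a cleaner substitute for the paper's row-by-row index check, and your explicit passage from the simplex $\Delta$ to the full tangent space $W-W$ via the positive/negative-part decomposition fills a gap the paper leaves implicit. One small caveat: your phrase ``otherwise the claim is vacuous'' is logically inverted---if (\ref{eq.nec}) failed the theorem would be \emph{false}, not vacuous---but since the paper treats (\ref{eq.nec}) as a standing hypothesis in constructing $T_0,T_1$ and $W$, this does no harm.
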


\begin{proof}
    It is enough to establish that the matrices ${T}_0$, ${T}_1$ always have at least one positive row. Then by the Theorem \ref{Dyadic Stochastic} they are contractions in $ \ell_1 $ norm, which implies the convergence of the scheme according to the Theorem \ref{Convergence property}.
    
   Note that both ${T}_0$, ${T}_1$ are column stochastic. Let $I \ = \ \{i: {c}_i > 0\}$ be the set of indices, corresponding to positive mask coefficients of a particular subdivision scheme. Let also $N = 2^{n-1}$ be the dimension of matrices ${T}_0$, ${T}_1$, and $2^n$ is the number of coefficients. In order for each of the matrices to have at least one positive line, it is necessary that for a fixed $ i $ the following inequalities are satisfied:
   
$$
0 \leq 2(i-1) \oplus (j-1) \leq 2^n - 1,
$$
$$
0 \leq (2i-1) \oplus (j-1) \leq 2^n - 1.
$$
    
    That is, all indices of the elements of each matrix in some row belong to the set $I$. Let $i = 1$, consider ${T}_0$. It is remained to investigate whether the minimum and maximum of $i$-th row of ${T}_0$ are in $I$ or not. The inequality above will take the form of
    $$
    0 \leq j - 1 \leq 2^n - 1.
    $$
    The left part of the inequality is always satisfied. $j - 1$ will be maximum when $j = N$ ($j$ takes the highest possible even value). Therefore, we have
    $$
    2^{n-1} - 1 \leq 2^n - 1,
    $$
    which obviously also holds. We obtain that when $i = 1$ all the coefficients of the matrix ${T}_0$ are positive, which means the matrix has a positive row. \\
    Consider now ${T}_1$ and likewise we set $i = 1$. We have
    $$
    0 \leq j \oplus 2 \leq 2^n - 1.
    $$
    The left side of the inequality, again, is always fulfilled. Let the number $N = 2^{n-1}$ contain in its binary decomposition the rank of two. $j - 1$ will be maximum when $j = N \oplus 3$ ($j$ takes the highest possible even value). Then we have
    $$
    2^{n-1} - 1 \leq 2^n - 1,
    $$
    which is obviously fulfilled. If the number $N = 2^{n-1}$ do not contain in its binary decomposition the rank of two, then $j - 1$ will be maximum when $j = N$ ($j$ takes the maximum possible value). Then the inequality takes the following form:
    $$
    2^{n-1} \oplus 2 \leq 2^n - 1 = 2 \cdot 2^{n-1} - 1,
    $$
   which is true under these conditions. Therefore, the matrix $T_1$ also has a positive row and the theorem is proved.
\end{proof}

\subsection{Non-negative dyadic subdivision schemes}

We now consider subdivision schemes with a mask of non-negative elements. Let $I \ = \ \{i: {c}_i > 0\}$, ${I}_N \ = \ I \oplus 2I \oplus \ldots \oplus 2^{N-1}I$.  We present a criterion for the convergence of dyadic refinement schemes with nonnegative coefficients (A similar result for classical refinement schemes can be found in \cite{M}, its proof is completely transferred to the dyadic case):

\begin{theorem}\label{Convergence of the non-negative SS}
    Subdivision scheme with non-negative coefficients converges if and only if there exists $N$ such, that for each $i$ there is $j$ such, that $i \oplus l \ominus 2^N j$ in ${I}_N$, l = 0, \ldots, n - 1, $N = 2^{n-1}$
\end{theorem}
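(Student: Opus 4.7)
The plan is to iterate the subdivision scheme $N$ times and reduce the problem to the contraction criterion already established in Theorems \ref{Dyadic Stochastic} and \ref{Convergence property}. The key observation is that the $N$-th iterate $S^N$ is itself a dyadic subdivision operator with mask $\{c^{(N)}_k\}$ whose support is exactly $I_N = I \oplus 2I \oplus \ldots \oplus 2^{N-1}I$. Non-negativity is preserved by the iteration, and the necessary condition (\ref{eq.nec}) for $S$ transfers to the necessary condition for $S^N$, so the associated transition matrices $T_0^{(N)}, T_1^{(N)}$ (defined exactly as in Section 4 but from the iterated mask) remain column-stochastic on $W$.

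For the sufficiency direction, I would first unpack the combinatorial condition in terms of the entries of $T_0^{(N)}$ and $T_1^{(N)}$. Reading off their entries from the formula ${T_0}_{ij} = c_{2(i-1)\oplus (j-1)}$ (and the analogous formula for $T_1$) at the $N$-th iterate, the hypothesis that for every $i$ there exists $j$ with $i \oplus l \ominus 2^N j \in I_N$ for all $l=0,\dots,n-1$ says precisely that some row of $T_0^{(N)}$ (and, by the parallel argument, of $T_1^{(N)}$) has all entries strictly positive. Invoking Theorem \ref{Dyadic Stochastic}, both $T_0^{(N)}$ and $T_1^{(N)}$ are then strict $\ell_1$-contractions on the simplex $\Delta$, so Theorem \ref{Convergence property} applied to the iterated scheme delivers convergence of $S^N$, and hence of $S$.

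For the necessity direction, I would argue contrapositively. If the combinatorial condition fails for every $N$, then no iterated transition matrix has a fully positive row; together with column-stochasticity this forces $T_0^{(N)}, T_1^{(N)}$ to preserve a nontrivial face of $\Delta$, on which they act without strict contraction. Translating back through Theorem \ref{Continuous refinement equation solution}, one obtains $\rho(A_0,A_1) \geq 1$, so the refinement equation admits no continuous solution and the scheme cannot converge. This is essentially the dyadic analogue of the standard Markov-chain argument cited from \cite{M}, and the only modification needed is to replace ordinary addition with $\oplus$ wherever the mask indices appear.

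The main obstacle is the bookkeeping in the sufficiency step: the action of $S^N$ on indices intertwines the $\oplus$-convolution (appearing in the defining formula of $S$) with the dilation by $2$ at every step, so carefully identifying the support of $c^{(N)}$ as $I_N$ and showing that the row pattern of $T_0^{(N)}, T_1^{(N)}$ is captured precisely by the condition ``$i\oplus l \ominus 2^N j \in I_N$ for $l=0,\dots,n-1$'' requires a careful induction on $N$. Once this correspondence is set up, the rest of the proof is a direct application of the earlier theorems and mirrors the classical argument.
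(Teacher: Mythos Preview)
The paper does not actually prove this theorem: it states the result and remarks that ``its proof is completely transferred to the dyadic case'' from Melkman \cite{M}. So there is no in-paper argument to compare against; the relevant benchmark is Melkman's classical proof.

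Your overall strategy (iterate, obtain a positive row, invoke the stochastic-contraction Theorem \ref{Dyadic Stochastic}, conclude via Theorem \ref{Convergence property}) is exactly the Melkman approach and is the right plan. There is, however, a concrete error in how you package the iteration. The operator $S^N$ is a subdivision scheme with dilation $2^N$, not $2$; consequently it is \emph{not} described by two transition matrices ``$T_0^{(N)},T_1^{(N)}$ defined exactly as in Section 4 but from the iterated mask''. Rather, it is governed by the $2^N$ matrices $T_{d_1}\cdots T_{d_N}$, one for each $(d_1,\dots,d_N)\in\{0,1\}^N$, and these products are precisely the blocks whose rows are indexed by the condition ``$i\oplus l\ominus 2^N j\in I_N$ for $l=0,\dots,n-1$''. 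The combinatorial hypothesis says that \emph{every} such length-$N$ product has a strictly positive row; Theorem \ref{Dyadic Stochastic} then gives a uniform $\ell_1$-contraction factor $q<1$ for all of them, whence $\rho(T_0|_W,T_1|_W)\le q^{1/N}<1$ and convergence follows. Your two-matrix formulation would miss most of the products and does not match the quantifier structure of the theorem.

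Your necessity sketch also has a gap. From ``no length-$N$ product has a fully positive row'' you cannot immediately conclude that $T_0,T_1$ jointly preserve a fixed nontrivial face of $\Delta$: different products could lack positivity in different rows. Melkman's argument for this direction is more delicate (it builds, via the failure of the index condition at every level, a specific sequence on which $S^n$ fails to converge, or equivalently exhibits a persistent zero pattern). You should either reproduce that construction in the dyadic setting or give a genuine argument linking the absence of positive rows at all levels to $\rho(A_0,A_1)\ge 1$; the one-line reduction you wrote does not do this.
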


That is, set ${I}_N$ should be dense enough on the segment $ [\min \{i: i \in {I}_N \}, \max \{i: i \in {I}_N \} ]. $

Based on the criterion, we hypothesize:

\begin{conj}
    Subdivision scheme with nonnegative coefficients converges unless greatest common divisor of $ I \neq 1 $.
\end{conj}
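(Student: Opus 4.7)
The conjecture asserts one direction: if $\gcd(I)=1$, then the non-negative subdivision scheme converges. The plan is to derive this from the criterion in Theorem \ref{Convergence of the non-negative SS}, which reduces the question entirely to a combinatorial statement: whenever $\gcd(I)=1$, the iterated set $I_N = I \oplus 2I \oplus \ldots \oplus 2^{N-1}I$ must, for some sufficiently large $N$, be ``dense enough'' that for every $i$ there is $j$ with $i\oplus l \ominus 2^N j \in I_N$ for all $l=0,\ldots,n-1$. So the whole proof is a question about the XOR-structure generated by scaled copies of $I$.

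The first step is to separate out what $\gcd(I)=1$ actually supplies in this setting. The necessary condition of Theorem \ref{Necessary convergence condition} already forces $I$ to contain both even and odd indices, so parity alone is not the issue; the content of the hypothesis lies in higher bits. Conversely, if $\gcd(I)$ is even then every index in $I$ is even, $\sum c_{2k+1}=0\neq 1$, and the necessary condition fails immediately, giving the ``only if'' half for that case. The genuinely hard direction is the ``if'' direction, which I would pursue as follows.

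The natural viewpoint is to expand the definition of $I_N$. Each element is of the form $i_0 \oplus 2i_1 \oplus \ldots \oplus 2^{N-1}i_{N-1}$ with $i_k \in I$; writing each $i_k$ in binary, the bit of the XOR-sum at position $p$ is the $\mathbb F_2$-sum of the bits of $i_k$ at position $p-k$, taken over $k\leq p$. The plan is to pick a fixed odd element $i_\ast\in I$ and use its shifted copies $2^k i_\ast$ to toggle any selected bit; then use the remaining elements of $I$, via further XOR-combinations with appropriate shifts, to correct the ``carry-like garbage'' that $i_\ast$ drags into higher bits. A careful bookkeeping on these corrections, inductive in $N$, should yield an inclusion of the form $I_N \supseteq \{m,m+1,\ldots,m+2^{N-c}-1\}$ for constants $m,c$ depending only on $I$, and this is more than enough to feed into Theorem \ref{Convergence of the non-negative SS}.

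The main obstacle, in my view, is precisely the correction step, because it is where the classical integer $\gcd$ hypothesis must translate into a dyadic/XOR-linear statement. Bezout's identity gives signed integer coefficients producing $1$ from $I$, whereas $I_N$ admits only sign-free XOR combinations of scaled copies of $I$, one summand per shift level. Bridging this gap—showing that the carry-cancellations implicit in Bezout can be simulated by an $\mathbb F_2$-combination of long enough shifted copies of $I$—is the heart of the matter. If this simulation turns out to fail for some pathological $I$ (for instance, one in which the odd elements all share a fixed high-order bit pattern inaccessible to XOR-cancellation by the even elements), the conjecture would need to be reformulated with $\gcd$ taken in a dyadic sense rather than the usual one, which is consistent with its status as an open conjecture in the paper.
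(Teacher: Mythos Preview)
The paper does not prove this statement: it is explicitly labelled a \emph{Conjecture} (``Based on the criterion, we hypothesize''), and the only supporting material given is a single worked example with $I=\{0,6,9,15\}$ showing that when $\gcd(I)\neq 1$ the iterated sets $I_N$ fail the density condition of Theorem~\ref{Convergence of the non-negative SS}, so the scheme diverges. That is evidence for the ``unless'' clause in one instance, not a proof of either direction in general. There is therefore nothing in the paper for your argument to be compared against.

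Your write-up is not a proof either, and you say so yourself: it is a plan of attack together with a frank identification of the obstruction. As a plan it is reasonable---reducing to the combinatorial criterion of Theorem~\ref{Convergence of the non-negative SS} is the obvious route, and your observation that the real difficulty is converting an integer B\'ezout relation into an $\mathbb F_2$/XOR statement about $I_N$ is exactly the crux. But the ``correction step'' you describe is not carried out, and your own final paragraph concedes that it may simply fail for some $I$; at that point you are restating the conjecture, not proving it. In short: the paper offers no proof, and neither do you---what you have is a sensible outline whose hard step remains open, which is consistent with the conjectural status the paper assigns to the statement.
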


\begin{remark}
    Note that the classical analogue of the hypothesis above includes another case where the schemes obviously do not converge, namely: $I$ has exactly one odd element and it is the last one (or $I$ has exactly one even element and it is the first one). If this condition is satisfied, then the solution of the refinement equation will be discontinuous, at least at zero, but in the case of dyadic subdivision schemes, such discontinuities are quite acceptable. Moreover, many numerical examples (section 8, example 9) show that when executed (\ref{Necessary convergence condition}), such dyadic subdivision schemes converge.
\end{remark}

We give an example of a scheme, which GCD($I$) $ \neq $ 1, and show that it diverges. Let the set of indices of non-negative coefficients be as follows: $I = \{0, 6, 9, 15\}$. Consider the sum
$$
    I \oplus 2I \ = \ \{0, 6, 9, 15; 12, 10, 5, 3; 18, 20, 27, 29; 30, 24, 23, 17\} \ = \ \{I, {I}_1, {I}_2, {I}_3\}
$$
It can be seen that there are no numbers $\{1, 2, 4, 7, 8, 11, 13, 14, 16, 19, 21, 22, 25, 26, 28\}$. That is, by Theorem \ref{Convergence of the non-negative SS} such a scheme does not converge. Indeed, the set $I \oplus 2I \oplus 4I$ does not fill these gaps, but only add new ones, for example:
$$
    I \oplus 2I \oplus 24 \ = \ I \oplus 2I.
$$
A whole series of numbers falls out, and this is repeated for the set ${I}_n$ at least once for each $n$. That is, the conditions of the criterion are obviously not fulfilled.

Hypothesis 1 states that the convergence of non-negative subdivision schemes can be simply verified. For arbitrary schemes, this is not so: the convergence of a subdivision scheme depends on the magnitude of the joint spectral radius of the matrices. $A_0, \, A_1$ (section 3). Another class of schemes whose convergence is relatively easy to verify is schemes with a small number of coefficients. In the next section, we show that the convergence of schemes with four coefficients is verified elementary. Note that in the classical theory of subdivision schemes this is not the case!

\subsection{Convergence of dyadic subdivision schemes with four coefficients}

Consider arbitrary dyadic subdivision scheme with coefficients $ \{ {c}_0, {c}_1, {c}_2, {c}_3 \} $. If it converges, then by the Theorem \ref{Necessary convergence condition} the sequence of its coefficients can be rewritten as
$ \{ c_0, c_1, 1 - c_0, 1 - c_1 \} $. Matrices $ T_0, T_1 $ have the form, respectively:

$$
\begin{pmatrix}
  c_0 & c_1 \\
  1 - c_0 & 1 - c_1
\end{pmatrix}
$$

$$
\begin{pmatrix}
  c_1 & c_0 \\
  1 - c_1 & 1 - c_0
\end{pmatrix}
$$

Consider the restriction of these matrices to a common linear invariant subspace $X \ = \ \{ {x} : {x}_1 + {x}_2 = 0 \} $ and represent them in basis $ {(1; -1)}^T $
We obtain:
$$
T_0 \cdot {(1; -1)}^T = (c_0 - c_1) \cdot {(1; -1)}^T
$$

$$
T_1 \cdot {(1; -1)}^T = (c_1 - c_0) \cdot {(1; -1)}^T
$$

It is known \cite{CDM}, that the subdivision scheme converges, if the joint spectral radius of $T_0, T_1$, which here equals to $ \max \{ | c_0 - c_1 | , | c_1 - c_0 | \}$, is less than one. We obtain: $ \max \{ | c_0 - c_1 | , | c_1 - c_0 | \} < 1$ and, therefore,

\begin{equation}\label{System of c_0, c_1}
 \begin{cases}
    c_0 - c_1 < 1, \\
    c_1 - c_0 < 1.
 \end{cases}
\end{equation}

The system above can be reduced to a double inequality: $ c_0 - 1 < c_1 < c_0 + 1 $. We depict its solution in the figure below:

\begin{figure}[h]
\begin{minipage}[h]{1\linewidth}
\center{\includegraphics[width=0.5\linewidth]{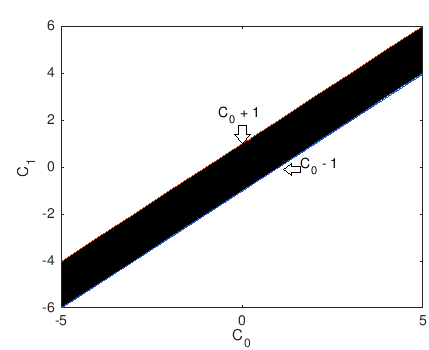} \\}
\end{minipage}
\hfill
\caption{The connection between $c_0$ and $c_1$.}
\label{ris:image01}
\end{figure}

Thus, all the schemes of the four coefficients, of which the first two satisfy the strict inequality above, obviously converge.

\begin{remark}
    The classical theory of subdivision schemes does not provide a complete description of all cases in which the scheme of four coefficients converges. (e.g. \cite{NPS, P_3}).
\end{remark}

\subsection{Numerical examples}

We present some illustrations of dyadic subdivision schemes and compare them with their classical counterparts.
For simplicity, we represent only the refinable functions of the subdivision schemes (the remaining sequences are obtained by various shifts, classical and dyadic, of these functions). All the images below were taken after ten iterations of the subdivision schemes. Example 6 illustrates the case when in the classical case the subdivision scheme diverges, and in the dyadic it converges.

\begin{enumerate}
    \item Let the scheme be given by a sequence of coefficients \\
$c = \{ 0.3, 0.1, 0.7, 0.9 \}$. We construct the refinable function of this scheme in the classical and dyadic case.

\begin{figure}[h]
\begin{minipage}[h]{0.49\linewidth}
\center{\includegraphics[width=1\linewidth]{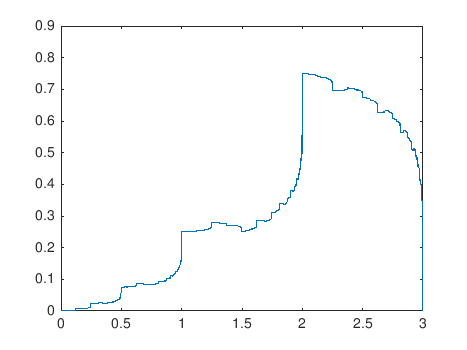} \\ 1)Classic case}
\end{minipage}
\hfill
\begin{minipage}[h]{0.49\linewidth}
\center{\includegraphics[width=1\linewidth]{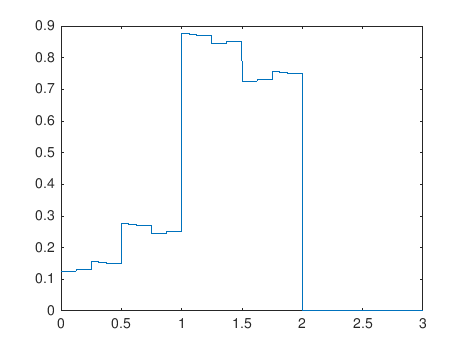} \\ 2)Dyadic case}
\end{minipage}
\caption{}
\label{ris:image1}
\end{figure}

In both cases the refinable function is continuous. In the classical case it is supported by a segment $ [0; 3] $. At the point $ x = 3 $ the function is zero, despite it is quite unclear on the figure 3 (the function rapidly decreases to zero). In the dyadic case the refinable function is supported by the segment $ [0; 2] $ and is zero at the point $ x = 2 $ correspondingly. Wherein at the point $ x = 2 $ there is a discontinuity, which, as we know, does not contradict the definition of $W$-continuity.

\item Next, we present a scheme with coefficients

$c = \{ 0.6, 0.9, 0.4, 0.1 \}$.

\begin{figure}[h]
\begin{minipage}[h]{0.49\linewidth}
\center{\includegraphics[width=1\linewidth]{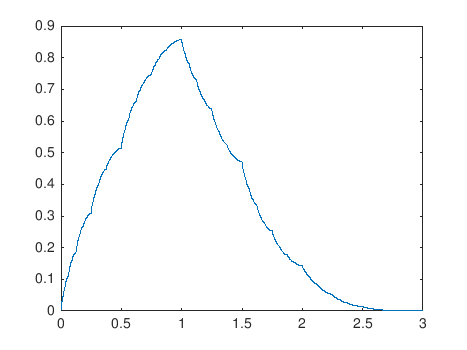} \\ 1)Classic case}
\end{minipage}
\hfill
\begin{minipage}[h]{0.49\linewidth}
\center{\includegraphics[width=1\linewidth]{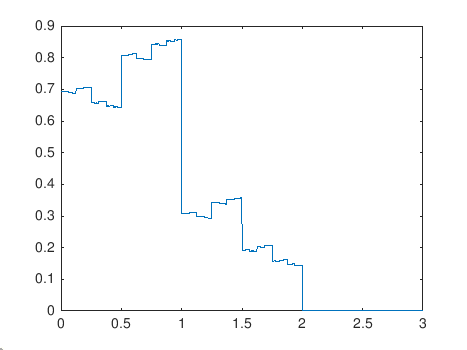} \\ 2)Dyadic case}
\end{minipage}
\caption{}
\label{ris:image2}
\end{figure}

\newpage

\item Now the coefficients are equal to $c = \{ 0.6, 1.1, 0.4, -0.1 \}$.

\begin{figure}[h]
\begin{minipage}[h]{0.49\linewidth}
\center{\includegraphics[width=1\linewidth]{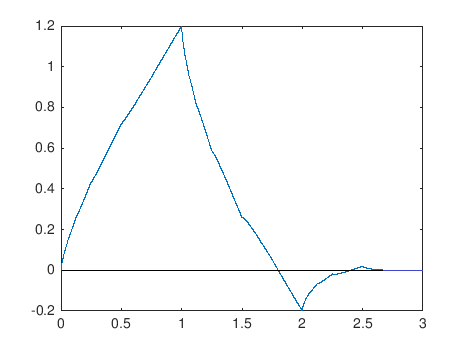} \\ 1)Classic case}
\end{minipage}
\hfill
\begin{minipage}[h]{0.49\linewidth}
\center{\includegraphics[width=1\linewidth]{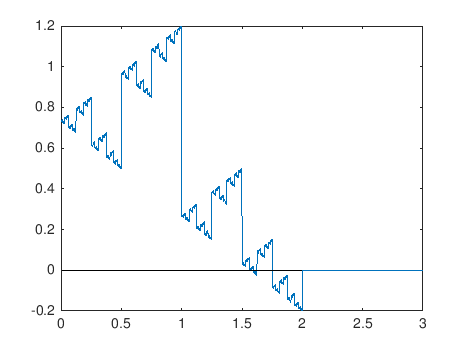} \\ 2)Dyadic case}
\end{minipage}
\caption{}
\label{ris:image3}
\end{figure}

\item Now $c = \{ \frac{1}{4}, \frac{3}{4}, \frac{3}{4}, \frac{1}{4} \}$.

\begin{figure}[h]
\begin{minipage}[h]{0.49\linewidth}
\center{\includegraphics[width=1\linewidth]{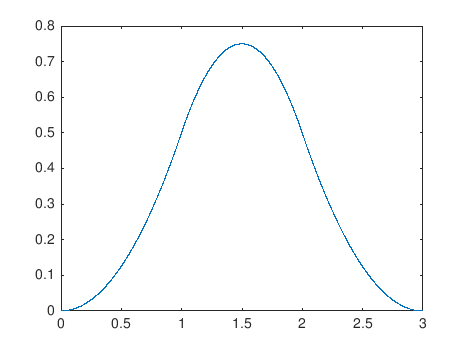} \\ 1)Classic case}
\end{minipage}
\hfill
\begin{minipage}[h]{0.49\linewidth}
\center{\includegraphics[width=1\linewidth]{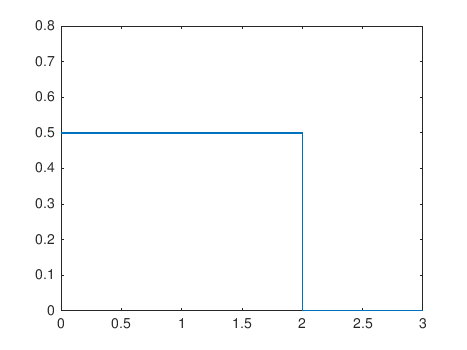} \\ 2)Dyadic case}
\end{minipage}
\caption{}
\label{ris:image4}
\end{figure}

\newpage

\item Let us give an example of divergent scheme: $c = \{ 2.6, 0.7, -1.6, 0.3 \}$.

\begin{figure}[h]
\begin{minipage}[h]{0.49\linewidth}
\center{\includegraphics[width=1\linewidth]{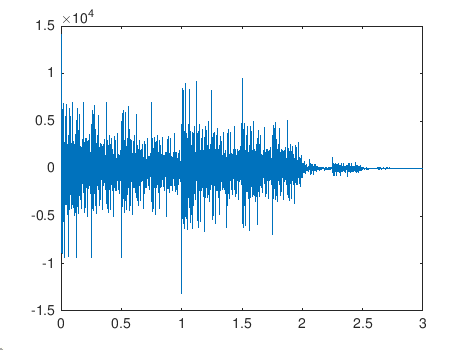} \\ 1)Classic case}
\end{minipage}
\hfill
\begin{minipage}[h]{0.49\linewidth}
\center{\includegraphics[width=1\linewidth]{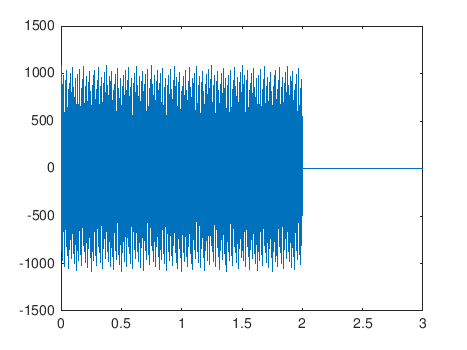} \\ 2)Dyadic case}
\end{minipage}
\caption{}
\label{ris:image5}
\end{figure}

\item Let us give an example, when in the classical case the subdivision scheme diverges, and in the dyadic case it converges:

$c = \{ 0.4, -0.1, 0.6, 1.1 \}$.

\begin{figure}[h]
\begin{minipage}[h]{0.49\linewidth}
\center{\includegraphics[width=1\linewidth]{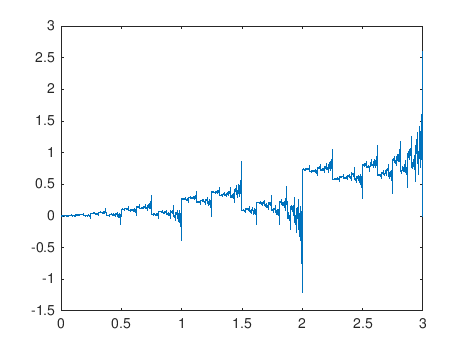} \\ 1)Classic case}
\end{minipage}
\hfill
\begin{minipage}[h]{0.49\linewidth}
\center{\includegraphics[width=1\linewidth]{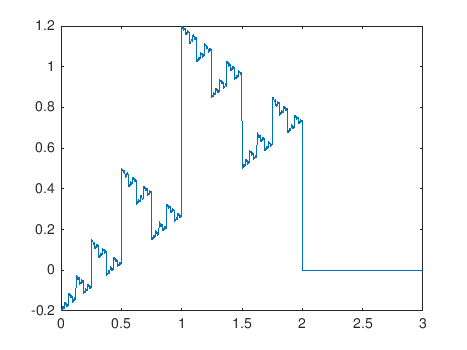} \\ 2)Dyadic case}
\end{minipage}
\caption{}
\label{ris:image6}
\end{figure}

In the classical case, it cannot converge, since the last coefficient is greater than one (see \cite{CDM}). In the dyadic case, it converges for reasons from the section 7: the conditions \ref{System of c_0, c_1} for the first two coefficients are fulfilled:

$$
    c_0 - 1 < c_1 < c_0 + 1,
$$
$$
    0.4 - 1 < -0.1 < 0.4 + 1,
$$

and, therefore, such a scheme converges.

\newpage

\item Consider the schemes given by the eight coefficients. Put all eight coefficients equal to $ \frac{1}{4} $:

\begin{figure}[h]
\begin{minipage}[h]{0.49\linewidth}
\center{\includegraphics[width=1\linewidth]{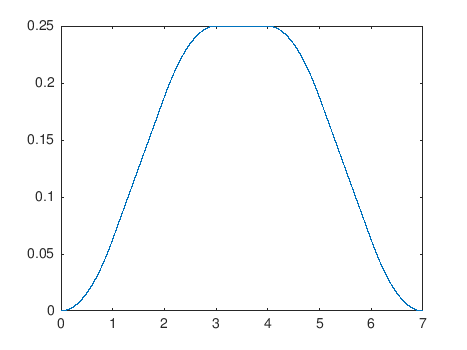} \\ 1)Classic case}
\end{minipage}
\hfill
\begin{minipage}[h]{0.49\linewidth}
\center{\includegraphics[width=1\linewidth]{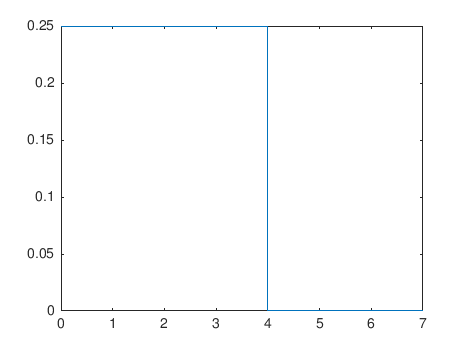} \\ 2)Dyadic case}
\end{minipage}
\caption{}
\label{ris:image7}
\end{figure}

\item Now $c = \{ \frac{1}{8}, \frac{3}{8}, \frac{1}{8}, \frac{3}{8}, \frac{3}{8}, \frac{1}{8}, \frac{3}{8}, \frac{1}{8} \}$.

\begin{figure}[h]
\begin{minipage}[h]{0.49\linewidth}
\center{\includegraphics[width=1\linewidth]{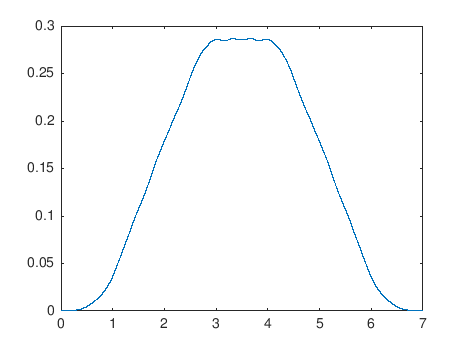} \\ 1)Classic case}
\end{minipage}
\hfill
\begin{minipage}[h]{0.49\linewidth}
\center{\includegraphics[width=1\linewidth]{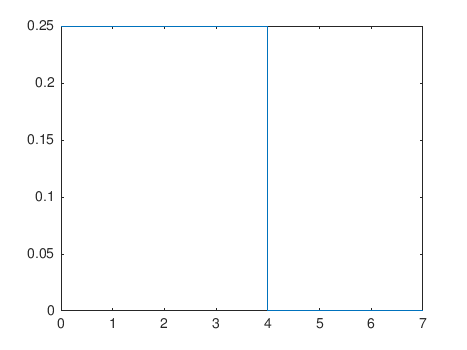} \\ 2)Dyadic case}
\end{minipage}
\caption{}
\label{ris:image8}
\end{figure}

\newpage

\item Let us give an example of a dyadic interpolating ( $ c_0 \ = \ 1 $, the rest of the even coefficients are zeros) subdivision scheme defined by sixteen coefficients:

$c = \{ 1, \frac{1}{8}, 0, \frac{1}{8}, 0, \frac{1}{8}, 0, \frac{1}{8}, 0, \frac{1}{8}, 0, \frac{1}{8}, 0, \frac{1}{8}, 0, \frac{1}{8} \}$.

\begin{figure}[h]
\begin{minipage}[h]{0.49\linewidth}
\center{\includegraphics[width=1\linewidth]{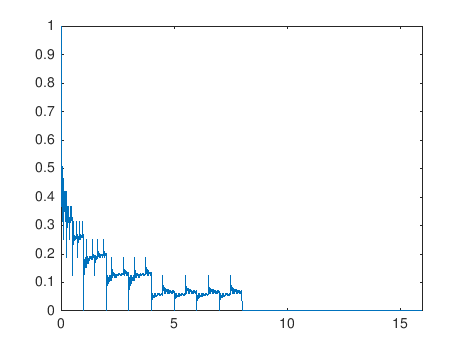} \\ 1)refinable function}
\end{minipage}
\hfill
\begin{minipage}[h]{0.49\linewidth}
\center{\includegraphics[width=1\linewidth]{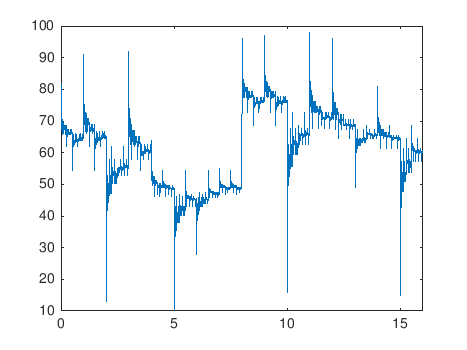} \\ 2)An arbitrary sequence of 16 elements in the range [1:100]}
\end{minipage}
\caption{}
\label{ris:image9}
\end{figure}

\end{enumerate}

The author is grateful to Professor V. Yu. Protasov for valuable advice and remarks.

\end{document}